\documentclass[11pt,a4paper]{article}

\usepackage[T1]{fontenc}
\usepackage[utf8]{inputenc}
\usepackage[a4paper,margin=1in]{geometry}
\usepackage{amsmath,amssymb,amsfonts,amsthm,mathrsfs}
\usepackage{graphicx}
\usepackage{booktabs}
\usepackage{subfigure}
\usepackage{authblk}
\usepackage{microtype}
\usepackage[hidelinks]{hyperref}
\hypersetup{
  pdftitle={A mesh- and horizon-robust preconditioner for finite element discretizations of volume-constrained nonlocal diffusion problems},
  pdfauthor={Jiashu Lu, Yufeng Nie, Haolun Zhang, and Pingrui Zhang},
  pdfkeywords={nonlocal diffusion, pair-cell assembly, asymptotically robust preconditioner}
}

\numberwithin{equation}{section}
\allowdisplaybreaks
\theoremstyle{plain}
\newtheorem{theorem}{Theorem}[section]
\newtheorem{lemma}[theorem]{Lemma}
\newtheorem{proposition}[theorem]{Proposition}
\theoremstyle{definition}
\newtheorem{assumption}[theorem]{Assumption}
\theoremstyle{remark}
\newtheorem*{remark}{Remark}

\begin{document}

\title{A mesh- and horizon-robust preconditioner for finite element discretizations of volume-constrained nonlocal diffusion problems}
\author[1]{Jiashu Lu}
\author[2]{Yufeng Nie}
\author[2]{Haolun Zhang}
\author[3]{Pingrui Zhang}
\affil[1]{School of Mathematics, Xi'an University of Technology, Xi'an 710054, China}
\affil[2]{School of Mathematics and Statistics, Northwestern Polytechnical University, Xi'an 710129, China}
\affil[3]{Shaoxing Institute, Zhejiang University, Shaoxing 312099, China}
\date{}

\maketitle

\begin{abstract}
In this paper, we develop a preconditioner that is robust with respect to both the mesh size and the interaction horizon for finite element discretizations of volume-constrained nonlocal diffusion problems. Motivated by the Fourier multiplier of the nonlocal operator, we construct the preconditioner $B_h=G_h+\beta\delta^2D_h^{-1}$, where $G_h$ is a symmetric local Poisson solver, $D_h$ is the lumped mass matrix, and $\beta$ is a kernel-dependent constant. We rigorously prove that the spectrum of the $B_h$-preconditioned system is bounded above and away from zero uniformly with respect to the mesh size and horizon. Moreover, we develop a pair-cell assembly for the finite element matrix that preserves the symmetry and positive definiteness required by the preconditioner and PCG, and establish an $L^2$-error estimate for the finite element discretization. Finally, we use a single symmetric geometric multigrid V-cycle for $G_h$ in the numerical experiments, and the corresponding numerical results confirm the predicted convergence rate and show bounded condition numbers and PCG iteration counts over the tested meshes, horizons, and kernels.
\end{abstract}

\noindent\textbf{Keywords.} Nonlocal diffusion; pair-cell assembly; asymptotically robust preconditioner.

\medskip
\noindent\textbf{Mathematics Subject Classification.} 65R20; 65N30; 65F08.

\section{Introduction}
Nonlocal models have attracted considerable attention in computational mathematics and computational mechanics because of their ability to describe long-range interactions and discontinuities. Unlike local partial differential equation models, nonlocal models describe spatial interactions through integral operators and are therefore naturally suited to modeling and analyzing problems involving discontinuities and anomalous diffusion. For example, the peridynamic model provides a nonlocal counterpart of continuum mechanics and has been used to describe cracks and their evolution\cite{silling2000reformulation}, including crack initiation and propagation\cite{dipasquale2014crack,panchadhara2016application,zhou2025research}. Moreover, volume-constrained nonlocal models can describe general Markov jump processes in bounded domains\cite{du2012analysis,du2014nonlocal} and provide a natural connection between classical and fractional diffusion\cite{d2013fractional,tian2016asymptotically,du2023nonlocal,du2025error}. 

Although nonlocal models offer greater modeling flexibility than classical partial differential equation models, the nonlocality also creates substantial difficulties in numerical computation. These difficulties arise mainly in assembling the discrete matrices and solving the resulting linear systems. Up to now, several approaches have been developed to address these two difficulties. For matrix assembly, in the light of the asymptotically compatible framework\cite{tian2014asymptotically}, efficient numerical methods and multi-dimensional implementations are developed\cite{d2021cookbook,chen2024efficient,lu2026nonlocal}. For the solution of the discrete systems, the fast Fourier transform (FFT) type method and the  multigrid methods have been developed for structured nonlocal discretizations\cite{chen2017convergence,chen2024fast,tian2024fast,liu2025fast}, and Schwarz methods have been studied for nonlocal Dirichlet problems\cite{schuster2025schwarz}. More recently, uniform spectral bounds have been established for a $\tau$-preconditioner for a class of two-level Toeplitz systems arising from finite difference discretizations of nonlocal diffusion\cite{zhang2026spectral}. These studies have made substantial progress in the numerical treatment of nonlocal models. However, for large-scale conforming finite element discretizations on quasi-uniform meshes, the coefficient matrices can be severely ill-conditioned, with condition numbers that depend strongly on $h$ and $\delta$. An effective preconditioner robust with respect to both \(h\) and \(\delta\) is therefore still needed.

To address this gap, we develop an additive preconditioner that is robust with respect to both $h$ and $\delta$, and we call this property \emph{asymptotic robustness} for a preconditioner. To obtain this robustness, we first show that the Fourier symbol of the nonlocal operator on $\mathbb R^d$ is uniformly comparable to $|\xi|^2/(1+\delta^2|\xi|^2)$ under the kernel assumptions in Section \ref{sec2}. Therefore, for $\xi\ne0$, the symbol of the inverse operator is uniformly comparable to $|\xi|^{-2}+\delta^2$. After finite element discretization, these two terms correspond to a local Poisson solver and an inverse mass term, respectively. This leads to the preconditioner:
\begin{equation*}
	B_h=G_h+\beta\delta^2D_h^{-1}
\end{equation*}
Here $G_h$ is a symmetric local Poisson solver, $D_h$ is the lumped mass matrix, and $\beta$ is a constant determined by the kernel mass. 

The main contributions of this paper are summarized as follows:
\begin{itemize}
	\item[(i)] We rigorously prove that there exist constants $\delta_0,c,C>0$ such that for all $0<\delta\le\delta_0$,
	\begin{equation*}
		\sigma\!\left(B_h^{1/2}A_hB_h^{1/2}\right)\subset[c,C].
	\end{equation*}
	The spectral bounds are independent of $h$, $\delta$, and kernel partitions. The result holds for any symmetric local solver $G_h$ uniformly spectrally equivalent to the inverse local stiffness matrix. Consequently, the PCG convergence rate is uniform in these parameters.
	\item[(ii)] We develop a pair-cell assembly that exactly realizes the cell-averaged Galerkin matrix and preserves its symmetry and positive definiteness.
	\item[(iii)] We derive an $L^2$-error estimate for the finite element approximation with the proposed pair-cell assembly. The $O(h^2)$ convergence rate is obtained under the stated regularity and consistency assumptions. And the numerical convergence test further confirms the theoretical result.
	\item[(iv)] We realize $G_h$ by a single symmetric geometric multigrid V-cycle and test the resulting preconditioner over a large range of $h$, $\delta$, and kernels. The condition numbers and PCG iteration counts remain bounded, confirming the robustness and effectiveness of $B_h$.
\end{itemize}
The remainder of the paper is organized as follows. Section \ref{sec2} presents the nonlocal problem, finite element discretization, pair-cell assembly, and preconditioner. Sections \ref{sec3} and \ref{sec4} establish the continuous and discrete estimates, respectively. Section \ref{sec5} proves the main theorem. Section \ref{sec6} reports the numerical results, and Section \ref{sec7} concludes the paper.

\section{Nonlocal finite element formulation and asymptotically robust preconditioner}\label{sec2}
\subsection{Nonlocal diffusion problem}
Let $d\ge1$ denote the spatial dimension, let $\Omega\subset\mathbb R^d$ be a bounded domain, and let $B_r(x)$ denote the open ball centered at $x$ with radius $r$. Set $B_r=B_r(0)$ and $\omega_d=|B_1|$. For a horizon parameter $\delta>0$, define the interaction domain by
\begin{equation*}
\Omega_I^\delta
=\left\{y\in\mathbb R^d\setminus\Omega:
y\in B_\delta(x)\text{ for some }x\in\Omega\right\},
\end{equation*}
 and we set $\widehat\Omega_\delta=\Omega\cup\Omega_I^\delta$. Given $f\in L^2(\Omega)$ and $g_\delta\in L^2(\Omega_I^\delta)$, the Dirichlet volume-constrained nonlocal diffusion problem for $u_\delta\in L^2(\widehat\Omega_\delta)$ is
\begin{equation}\label{modelequation}
	\begin{cases}
		-\mathscr L_\delta u_\delta=f & \text{in }\Omega,\\
		u_\delta=g_\delta & \text{in }\Omega_I^\delta.
	\end{cases}
\end{equation}
where $\mathscr L_\delta$ is the nonlocal diffusion operator defined by
\begin{equation*}
	(\mathscr L_\delta u)(x)
	=2\int_{B_\delta(x)}\gamma_\delta(x,y)
	\bigl(u(y)-u(x)\bigr)\,dy,
	\qquad x\in\Omega,
\end{equation*}
and the kernel function $\gamma_\delta(x,y)$  is given by
\begin{equation*}
	\gamma_\delta(x,y)
	=\delta^{-(d+2)}\rho\!\left(\frac{y-x}{\delta}\right).
\end{equation*}
Here $\rho$ satisfies
\begin{equation}\label{2.1}
	\rho\in L^1(B_1),\quad
	\rho(z)=\rho(-z),\quad
	\rho(z)\ge0\quad\text{a.e. in }B_1.
\end{equation}
Moreover, we assume that there exist constants $M_\rho,\rho_*>0$ and $0<r_*\le1$ such that
\begin{equation}\label{2.2}
	\|\rho\|_{L^1(B_1)}\le M_\rho,\qquad
	\rho(z)\ge\rho_*
	\quad\text{for a.e. }z\in B_{r_*}.
\end{equation}

Next, we define the kernel mass and the scaling parameter of the preconditioner by
\begin{equation}\label{2.3}
	\mu_\rho=\int_{B_1}\rho(z)\,dz,\qquad
	\beta=\frac{1}{2\mu_\rho}.
\end{equation}
Then \eqref{2.2} yields
\begin{equation}\label{2.4}
	\beta_-:=\frac{1}{2M_\rho}
	\le\beta\le
	\frac{1}{2\rho_*\omega_dr_*^d}
	=:\beta_+.
\end{equation}

Define $\ell_\delta\in L^2(\widehat\Omega_\delta)$ by $\ell_\delta=0$ in $\Omega$ and $\ell_\delta=g_\delta$ in
$\Omega_I^\delta$, and set $w_\delta=u_\delta-\ell_\delta$ on $\widehat\Omega_\delta$. Then $w_\delta=0$ in $\Omega_I^\delta$ and $u_\delta=w_\delta+\ell_\delta$. For $v\in L^2(\Omega)$, let $E_0v$ denote its zero extension to $\mathbb R^d$. Identifying $w_\delta$ with its restriction to $\Omega$, the model equation \eqref{modelequation} can be rewritten as
\begin{equation}\label{lifted}
-\mathscr L_\delta(E_0w_\delta)=F_\delta\quad\text{in }\Omega,
\qquad
F_\delta(x)=f(x)
+2\int_{\Omega_I^\delta\cap B_\delta(x)}
\gamma_\delta(x,y)g_\delta(y)\,dy.
\end{equation}
After extending $g_\delta$ by zero outside $\Omega_I^\delta$, the integral in $F_\delta(x)$ is a convolution of an $L^1$ kernel with an $L^2$ function. Young's convolution inequality therefore shows that $F_\delta\in L^2(\Omega)$.

Moreover, for $u,v\in L^2(\Omega)$, we define the symmetric bilinear form
\begin{equation}\label{2.5}
	\begin{aligned}
		a_\delta(u,v)
		&=\int_{\mathbb R^d}\int_{B_\delta(x)}
		\delta^{-(d+2)}\rho\!\left(\frac{y-x}{\delta}\right)
		\bigl(E_0u(y)-E_0u(x)\bigr)
		\bigl(E_0v(y)-E_0v(x)\bigr)\,dy\,dx .
	\end{aligned}
\end{equation}
The evenness of $\rho$ gives $(-\mathscr L_\delta(E_0u),v)_{L^2(\Omega)}=a_\delta(u,v)$. Hence the variational formulation of \eqref{lifted} is to find $w_\delta\in L^2(\Omega)$ such that
\begin{equation}\label{liftproblem}
	a_\delta(w_\delta,v)=(F_\delta,v)_{L^2(\Omega)}
	\quad\forall v\in L^2(\Omega).
\end{equation}
\subsection{Finite element discretization and pair-cell matrix assembly}\label{sec2.2}
Assume in this subsection that $\Omega$ is a bounded Lipschitz polyhedral domain. To approximate the nonlocal integrals numerically, we replace the spherical interaction neighborhood $B_\delta(x)$ by $x+\delta P$, where $P\subset\mathbb R^d$ is a centrally symmetric $d$-dimensional polytope approximating $B_1$ and satisfying
\begin{equation}\label{2.6}
	P=-P,\qquad B_{1/2}\subset P\subset\overline{B_1}.
\end{equation}
The corresponding polytopal interaction layer and extended domain are defined as
\begin{equation*}
	\Omega_{I,P}^\delta
	=\left\{y\notin\Omega:\ y\in x+\delta P
	\text{ for some }x\in\Omega\right\},
	\qquad
	\widehat\Omega_{\delta,P}=\Omega\cup\Omega_{I,P}^\delta.
\end{equation*}

To approximate $\rho$ on $P$, let $\mathscr Z_q$ be a finite partition of $P$ into polytopal cells $Z$ of positive measure. The cells cover $P$ without positive-measure overlap, and $Z\in\mathscr Z_q$ implies $-Z\in\mathscr Z_q$. Then we define the maximum diameter of $\mathscr Z_q$ by
\begin{equation}\label{2.7}
	q:=\max_{Z\in\mathscr Z_q}\operatorname{diam}Z,
	\quad 0<q\le r_*/2.
\end{equation}
For each kernel cell $Z$, we define
\begin{equation}\label{2.8}
	\mu_Z=\int_{Z\cap B_1}\rho(z)\,dz,\quad
	\rho^{\rm av}|_Z=\frac{\mu_Z}{|Z|}.
\end{equation}
Thus $\rho^{\rm av}$ is constant on each kernel cell and satisfies $\int_Z\rho^{\rm av}=\int_{Z\cap B_1}\rho$ for every $Z\in\mathscr Z_q$. We then extend $\rho^{\rm av}$ by zero outside $P$. Use $\rho^{\rm av}$ in place of $\rho$ in \eqref{2.5}, and denote the resulting bilinear form by $a_{\delta,\rho^{\rm av}}$. Since $\rho^{\rm av}$ is supported in $P$ and the functions are extended by zero outside $\Omega$, the form involves only ordered pairs $(x,y)$ in the interaction domain defined by
\begin{equation*}
	\mathcal I_{\delta,P}
	=\left\{(x,y)\in(\widehat\Omega_{\delta,P})^2:
	\begin{array}{l}
		x\in\Omega\text{ or }y\in\Omega,\\
		y-x\in\delta P
	\end{array}\right\}.
\end{equation*}
Equivalently, for $u,v\in L^2(\Omega)$,
\begin{equation*}
	\begin{aligned}
		a_{\delta,\rho^{\rm av}}(u,v)
		&=\delta^{-(d+2)}
		\int_{\mathcal I_{\delta,P}}
		\rho^{\rm av}\!\left(\frac{y-x}{\delta}\right)
		\bigl(E_0u(y)-E_0u(x)\bigr)\\
		&\hspace{34mm}\times
		\bigl(E_0v(y)-E_0v(x)\bigr)\,dy\,dx.
	\end{aligned}
\end{equation*}

Let $\mathcal T_h$ be a conforming simplicial mesh of $\Omega$ fitted to $\partial\Omega$, and set $h=\max_{T\in\mathcal T_h}\operatorname{diam}T$. Assume that these meshes are uniformly shape regular and globally quasi-uniform as $h$ varies. For each $T\in\mathcal T_h$, let $\mathbb P_1(T)$ denote the space of affine polynomials on $T$, and define
\begin{equation}\label{2.9}
	V_h^0=\left\{
	v_h\in C^0(\overline\Omega):
	v_h|_T\in\mathbb P_1(T)\ \forall T\in\mathcal T_h,\quad
	v_h|_{\partial\Omega}=0
	\right\}.
\end{equation}
Then we let $N_h=\dim V_h^0$, and let $\{\phi_i\}_{i=1}^{N_h}$ be the free-node nodal basis of the space in \eqref{2.9}.

Extend $\mathcal T_h$ to a conforming simplicial mesh $\widehat{\mathcal T}_h$ of $\widehat\Omega_{\delta,P}$, with $\partial\Omega$ as an element interface, and call the elements contained in $\Omega_{I,P}^\delta$ the exterior mesh. Let $g_{\delta,h}$ be a prescribed elementwise affine approximation of $g_\delta|_{\Omega_{I,P}^\delta}$ on the exterior mesh, and define the discrete lifting $\ell_{\delta,h}$ by
\begin{equation*}
	\ell_{\delta,h}(x)=
	\begin{cases}
		0,&x\in\Omega,\\
		g_{\delta,h}(x),&x\in\Omega_{I,P}^\delta.
	\end{cases}
\end{equation*}
We further define the effective load $F_h\in L^2(\Omega)$ by
\begin{equation*}
	F_h(x)=f(x)
	+2\delta^{-(d+2)}
	\int_{\Omega_{I,P}^\delta\cap(x+\delta P)}
	\rho^{\rm av}\!\left(\frac{y-x}{\delta}\right)
	g_{\delta,h}(y)\,dy,
	\qquad x\in\Omega.
\end{equation*}
Then the Galerkin approximation is to find $w_h\in V_h^0$ such that
\begin{equation}\label{galerkin}
	a_{\delta,\rho^{\rm av}}(w_h,v_h)
	=(F_h,v_h)_{L^2(\Omega)}
	\qquad\forall v_h\in V_h^0.
\end{equation}
By writing
\begin{equation*}
	w_h=\sum_{j=1}^{N_h}U_j\phi_j\in V_h^0,\qquad
	U=(U_1,\ldots,U_{N_h})^T,
	\qquad
	u_{\delta,h}=\ell_{\delta,h}+E_0w_h
	\quad\text{on }\widehat\Omega_{\delta,P},
\end{equation*}
the finite element approximation can be written in matrix form as
\begin{equation*}
	A_hU=b_h,
\end{equation*}
where
\begin{equation*}
	\begin{aligned}
		(A_h)_{ij}
		&=a_{\delta,\rho^{\rm av}}(\phi_j,\phi_i)\\
		&=\delta^{-(d+2)}
		\int_{\mathcal I_{\delta,P}}
		\rho^{\rm av}\!\left(\frac{y-x}{\delta}\right)
		\bigl(E_0\phi_j(y)-E_0\phi_j(x)\bigr)\\
		&\hspace{36mm}\times
		\bigl(E_0\phi_i(y)-E_0\phi_i(x)\bigr)\,dy\,dx,
		\qquad 1\le i,j\le N_h,
	\end{aligned}
\end{equation*}
and
\begin{equation*}
	\begin{aligned}
		(b_h)_i
		&=(F_h,\phi_i)_{L^2(\Omega)}\\
		&=(f,\phi_i)_{L^2(\Omega)}\\
		&\quad-\delta^{-(d+2)}
		\int_{\mathcal I_{\delta,P}}
		\rho^{\rm av}\!\left(\frac{y-x}{\delta}\right)
		\bigl(\ell_{\delta,h}(y)-\ell_{\delta,h}(x)\bigr)\\
		&\hspace{37mm}\times
		\bigl(E_0\phi_i(y)-E_0\phi_i(x)\bigr)\,dy\,dx,
		\qquad 1\le i\le N_h.
	\end{aligned}
\end{equation*}
Proposition \ref{proposition4.1} shows that $A_h$ is symmetric positive definite. We next construct a pair-cell assembly that preserves this property exactly under quadrature.

To evaluate these nonlocal double integrals, element-pair decompositions combined with nested inner--outer quadrature have been considered\cite{d2021cookbook}. Our construction also starts from an element pair $(T,S)$, but treats $(x,y)$ jointly in $\mathbb R^{2d}$ instead of integrating with respect to $x$ and $y$ successively. To preserve symmetry and positive semidefiniteness under quadrature, we further divide each element-pair interaction region according to the kernel cell containing $z=(y-x)/\delta$. For $T,S\in\widehat{\mathcal T}_h$ and $Z\in\mathscr Z_q$, define the pair cell
\begin{equation}\label{2.10}
	\mathcal C_{TSZ}=
	\left\{(x,y)\in(T\times S)\cap\mathcal I_{\delta,P}:
	(y-x)/\delta\in Z\right\}.
\end{equation}
Since $\widehat{\mathcal T}_h$ partitions $\widehat\Omega_{\delta,P}$ and $\mathscr Z_q$ partitions $P$, the pair cells in \eqref{2.10} cover $\mathcal I_{\delta,P}$ without positive-measure overlap. On $\mathcal C_{TSZ}$, $(y-x)/\delta\in Z$, and hence $\rho^{\rm av}((y-x)/\delta)=\mu_Z/|Z|$ is constant. On $T\times S$, the terms $E_0\phi_i(y)-E_0\phi_i(x)$ and $\ell_{\delta,h}(y)-\ell_{\delta,h}(x)$ are affine in $(x,y)$. Thus the integrands in $A_h$ and the lifting term in $b_h$ are polynomials of total degree at most two on each pair cell. We therefore
partition each pair cell into nonoverlapping $2d$-simplices and apply the following positive quadrature rule on each simplex.

For a $2d$-simplex $R=\operatorname{conv}\{v_0,\ldots,v_{2d}\}$, let $|R|$ denote its $2d$-dimensional measure. The quadrature nodes and weights are
\begin{equation}\label{2.11}
	\zeta_i=\theta_d v_i+\vartheta_d\sum_{j\ne i}v_j,\qquad
	\omega_i=\frac{|R|}{2d+1},\qquad i=0,\ldots,2d,
\end{equation}
where
\begin{equation}\label{2.12}
	\theta_d=\frac{1+2d/\sqrt{2d+2}}{2d+1},\qquad
	\vartheta_d=\frac{1-1/\sqrt{2d+2}}{2d+1}.
\end{equation}
The identities $\theta_d+2d\vartheta_d=1$, $\theta_d^2+2d\vartheta_d^2=1/(d+1)$, and $2\theta_d\vartheta_d+(2d-1)\vartheta_d^2=1/(2d+2)$ show that \eqref{2.11} is exact for polynomials of total degree at most two. Applying this quadrature rule to every simplex therefore evaluates the nonlocal integrals in \(A_h\) and \(b_h\) exactly. Denote the coordinates and weight of node \(\nu\) by \((x_\nu,y_\nu)\) and \(\omega_\nu\), and let \(Z(\nu)\) denote the kernel cell associated with its parent pair cell. Set
\begin{equation}\label{2.13}
	\begin{aligned}
		(d_\nu)_i&=E_0\phi_i(y_\nu)-E_0\phi_i(x_\nu),&
		e_\nu&=\ell_{\delta,h}(y_\nu)-\ell_{\delta,h}(x_\nu),\\
		c_\nu&=\delta^{-(d+2)}\bigl(\rho^{\rm av}|_{Z(\nu)}\bigr)\omega_\nu,
	\end{aligned}
\end{equation}
then summing over all quadrature nodes, we have
\begin{equation}\label{2.14}
	A_h=\sum_\nu c_\nu d_\nu d_\nu^T.
\end{equation}
And the load vector is
\begin{equation*}
	(b_h)_i=(f,\phi_i)_{L^2(\Omega)}
	-\sum_\nu c_\nu e_\nu(d_\nu)_i,
	\qquad 1\le i\le N_h.
\end{equation*}
\subsection{Asymptotically robust preconditioner}
Let $K_h$ and $M_h$ denote the stiffness and consistent mass matrices on $V_h^0$:
\begin{equation}\label{2.15}
	(K_h)_{ij}=\int_\Omega\nabla\phi_j\cdot\nabla\phi_i\,dx,\quad
	(M_h)_{ij}=\int_\Omega\phi_j\phi_i\,dx.
\end{equation}
Both matrices are symmetric positive definite. We also define the diagonal lumped mass matrix $D_h$ used in the preconditioner. Let
$\{\phi_j^{\rm full}\}_{j=1}^{N_h^{\rm full}}$ be the full nodal basis on
$\mathcal T_h$, including the basis functions associated with the nodes on
$\partial\Omega$, and let $M_h^{\rm full}$ be the corresponding mass matrix.
We first apply row-sum lumping to $M_h^{\rm full}$ and then remove the rows
and columns associated with the boundary nodes. The remaining
$N_h\times N_h$ diagonal matrix is $D_h$. Since the full nodal basis forms
a partition of unity, its diagonal entries satisfy
\begin{equation}\label{2.16}
	(D_h)_{ii}
	=
	\sum_{j=1}^{N_h^{\rm full}}
	\int_\Omega
	\phi_i(x)\phi_j^{\rm full}(x)\,dx
	=
	\int_\Omega\phi_i(x)\,dx,
	\qquad 1\le i\le N_h.
\end{equation}

Moreover, to approximate $K_h^{-1}$, we let $G_h$ be a symmetric local Poisson solver that satisfies the following assumption.
\begin{assumption}[Local solver]\label{assumption2.1}
	The local Poisson solver $G_h$ is symmetric and satisfies
	\begin{equation}\label{2.17}
		c_GX^TK_h^{-1}X
		\le X^TG_hX
		\le C_GX^TK_h^{-1}X
		\quad\forall X\in\mathbb R^{N_h},
	\end{equation}
	where $c_G,C_G>0$ are independent of $h$, $\delta$, $\rho$, $P$, and $q$.
\end{assumption}

For the uniformly refined mesh hierarchy and the symmetric weighted-Jacobi V-cycle used in Section \ref{sec6}, the multigrid convergence results in \cite{braess1983new,brannick2008uniform} imply that Assumption \ref{assumption2.1} holds.

Then, we define the following preconditioner for the nonlocal diffusion model:
\begin{equation}\label{2.18}
	B_h=G_h+\beta\delta^2D_h^{-1}.
\end{equation}
The next theorem shows that $B_h$ is asymptotically robust: for all sufficiently small $\delta$, the spectrum of $B_h^{1/2}A_hB_h^{1/2}$ is bounded above and away from zero uniformly in $\delta$ and the discretization parameters.
\begin{theorem}[asymptotic robustness]\label{theorem2.2}
	Assume that  \eqref{2.1}, \eqref{2.2}, the geometric and discretization assumptions of Section \ref{sec2.2}, and Assumption \ref{assumption2.1} hold. Then there exist constants $\delta_0,c,C>0$ such that, for every $0<\delta\le\delta_0$,
	\begin{equation}\label{2.19}
		\sigma\!\left(
		B_h^{1/2}
		A_h
		B_h^{1/2}
		\right)
		\subset[c,C],
	\end{equation}
	where $\sigma$ denotes the spectrum. Moreover, with $\kappa(X)$ denoting the spectral condition number of a symmetric positive definite matrix $X$, we have
	\begin{equation}\label{2.20}
		\kappa\!\left(
		B_h^{1/2}
		A_h
		B_h^{1/2}
		\right)
		\le\frac{C}{c}.
	\end{equation}
	The above constants depend only on $d$, $\Omega$, the kernel-class bounds in \eqref{2.2}, the uniform mesh constants, and $c_G,C_G$. In particular, they are independent of $h$, $\delta$, $\delta/h$, $P$, and $q$. 
\end{theorem}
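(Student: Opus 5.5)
The plan is to show that $A_h$ and $B_h^{-1}$ are spectrally equivalent. I would use the operator-theoretic form of this statement: $B_h^{1/2}A_hB_h^{1/2}$ has spectrum in $[c,C]$ if and only if
\begin{equation*}
c\,X^TB_h^{-1}X\le X^TA_hX\le C\,X^TB_h^{-1}X\qquad\forall X\in\mathbb R^{N_h}.
\end{equation*}
By Assumption \ref{assumption2.1}, $G_h$ may be replaced by $K_h^{-1}$ at the cost of $c_G,C_G$. Since $D_h$ is spectrally equivalent to $M_h$ uniformly on quasi-uniform meshes, and $\beta\in[\beta_-,\beta_+]$ by \eqref{2.4}, the matrix $D_h^{-1}$ may be replaced by $M_h^{-1}$. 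The claim thus reduces to spectral equivalence of $A_h$ with the inverse of $S_h:=K_h^{-1}+\delta^2M_h^{-1}$.

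Next I compute this inverse variationally. For commuting or non-commuting SPD matrices one has the parallel-sum formula
\begin{equation*}
X^T(K_h^{-1}+\delta^2M_h^{-1})^{-1}X=\min_{X=Y+W}\bigl(Y^TK_hY+\delta^{-2}W^TM_hW\bigr).
\end{equation*}
For $v_h=\sum X_i\phi_i$, this equals the discrete $K$-functional
\begin{equation*}
\mathcal K_h(v_h)^2=\inf_{v_h=y_h+w_h,\ y_h,w_h\in V_h^0}\Bigl(|y_h|_{H^1}^2+\delta^{-2}\|w_h\|_{L^2}^2\Bigr).
\end{equation*}
So it suffices to prove $a_{\delta,\rho^{\rm av}}(v_h,v_h)\simeq \mathcal K_h(v_h)^2$ uniformly.

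\emph{Upper bound.} Take any splitting $v_h=y_h+w_h$. By the standard nonlocal estimate, which follows from $\|\rho^{\rm av}\|_{L^1}=\mu_\rho\le M_\rho$ and a Taylor/translation argument applied to $E_0y_h\in H^1(\mathbb R^d)$, we get $a(y_h,y_h)\le C|y_h|_{H^1}^2$. The crude bound $(a-b)^2\le 2a^2+2b^2$ gives $a(w_h,w_h)\le 4\mu_\rho\delta^{-2}\|w_h\|^2$. The triangle inequality for the seminorm $a^{1/2}$ then completes the upper bound.

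\emph{Lower bound.} This is the main obstacle. Here $\rho^{\rm av}\ge\rho_*$ on the cells inside $B_{r_*/2}$, since $q\le r_*/2$, so $a(v,v)$ dominates the truncated form with a constant kernel on $B_{r_*\delta/2}$, uniformly in $P$ and $q$. For $v=E_0v_h$, I would build the splitting from a local average $y=\eta_\delta * v$ with a mollifier at scale $\sim\delta$, and set $w=v-y$. The continuous estimates are
\begin{equation*}
\|v-\eta_\delta*v\|^2\le C\delta^2 a(v,v),\qquad |\eta_\delta*v|_{H^1}^2\le Ca(v,v),
\end{equation*}
and both follow from Fourier symbol comparability with $|\xi|^2/(1+\delta^2|\xi|^2)$. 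These are presumably the continuous estimates of Section \ref{sec3}.

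The difficulty is that $y$ is not in $V_h^0$ and does not vanish on $\partial\Omega$. I would handle this as follows:
\begin{itemize}
\item \emph{Boundary.} Use $\delta\le\delta_0$ together with a cutoff or Hardy-type bound near $\partial\Omega$, since $E_0v$ vanishes outside $\Omega$.
\item \emph{Regime $\delta\lesssim h$.} Use the inverse inequality together with the lower bound $a(v_h,v_h)\gtrsim\delta^{-2}\|v_h\|^2$ near the diagonal. Alternatively, take $y_h=0$ directly, since the symbol bound shows $a\simeq\delta^{-2}\|\cdot\|^2$ on frequencies up to $h^{-1}$.
\item \emph{Regime $\delta\gtrsim h$.} Set $y_h=\Pi_hy$ with a Scott--Zhang-type projection onto $V_h^0$, which is $H^1$-stable, and absorb $y-\Pi_hy$ into $w_h$ using $\|y-\Pi_hy\|\le Ch|y|_{H^1}\le C\delta|y|_{H^1}$.
\end{itemize}
This case split on $\delta/h$ is what makes the constants independent of $\delta/h$; these are presumably the discrete estimates of Section \ref{sec4}.

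Combining both bounds with the reductions above gives \eqref{2.19}, and \eqref{2.20} is immediate.
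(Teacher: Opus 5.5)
Most of your architecture matches the paper. Your parallel-sum quantity is exactly the paper's $p_{t,h}(U)=U^TH_hU$ with $H_h^{-1}=K_h^{-1}+tM_h^{-1}$ (Lemma~\ref{lemma4.3}). Your mollified splitting plays the role of the Fourier minimizer $\widehat w=(1+t|\xi|^2)^{-1}\widehat\varphi$ in Theorem~\ref{theorem3.3}, your boundary cutoff is Lemma~\ref{lemma3.2}, and your upper bound is fine.

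\textbf{The gap: the regime $\delta\lesssim h$.} The theorem must cover this regime uniformly, including $\delta/h\to0$, and here your argument fails.

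\emph{The claimed lower bound is false.} You invoke $a(v_h,v_h)\gtrsim\delta^{-2}\|v_h\|^2$. But your own translation estimate and the inverse inequality give
$$a(v_h,v_h)\le M_\rho|v_h|_{H^1}^2\le CM_\rho h^{-2}\|v_h\|^2=CM_\rho(\delta/h)^2\,\delta^{-2}\|v_h\|^2,$$
so for $\delta\ll h$ the bound fails for every nonzero $v_h$.

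\emph{The symbol claim is reversed.} Frequencies $|\xi|\lesssim h^{-1}\lesssim\delta^{-1}$ lie where $s_{\delta}(\xi)\simeq|\xi|^2$, not where it is $\simeq\delta^{-2}$. So the choice $y_h=0$ cannot work.

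\emph{What this regime actually requires.} Here $\delta^2M_h^{-1}\preceq C(\delta/h)^2K_h^{-1}$, so $\mathcal K_h(v_h)^2\simeq|v_h|_{H^1}^2$. What you need is therefore $|v_h|_{H^1}^2\lesssim a(v_h,v_h)$, i.e.\ the opposite splitting $w_h=0$, and your bullet gives no argument for it.

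\emph{Scott--Zhang does not rescue it.} Your other regime's route gives $\delta^{-2}\|y-\Pi_hy\|^2\lesssim(h/\delta)^2|y|_{H^1}^2$, which blows up when $\delta\ll h$.

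\textbf{The missing idea.} You need a projection onto $V_h^0$ that is simultaneously $L^2$-stable and $H^1$-stable, used through the identity $v_h-\Pi_hy=\Pi_h(v_h-y)$. The paper takes $\Pi_h$ to be the $L^2$-orthogonal projection, which is $L^2$-contractive and, on quasi-uniform meshes, $H^1_0$-stable (Bramble--Pasciak--Steinbach). Let $y\in H_0^1(\Omega)$ be your cut-off smoothed function, and set $y_h=\Pi_hy$ and $w_h=\Pi_h(v_h-y)$. Then, for all $h$ and $\delta$ at once,
$$|y_h|_{H^1}^2+\delta^{-2}\|w_h\|_{L^2}^2\le C_\Pi\bigl(|y|_{H^1}^2+\delta^{-2}\|v_h-y\|_{L^2}^2\bigr).$$
This is the upper bound in Lemma~\ref{lemma4.3}. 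With it, no case split on $\delta/h$ is needed. In the paper, independence of $\delta/h$ comes from this double stability of $\Pi_h$, not from separating regimes.

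If you want to keep the case split, the $\delta\lesssim h$ branch can be repaired with the same tool. Write $v_h=\Pi_hy+\Pi_h(v_h-y)$ and use the inverse inequality on the second term:
$$|\Pi_h(v_h-y)|_{H^1}\lesssim h^{-1}\|v_h-y\|\lesssim(\delta/h)\,a(v_h,v_h)^{1/2}.$$
This still requires $L^2$-stability of the projection, which is exactly the ingredient your proposal lacks.
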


The proof of Theorem \ref{theorem2.2} is given in Section \ref{sec5}. Consequently, the PCG convergence rate is uniform in $h$, $\delta$, and $\delta/h$; see \cite[Chap. 9]{saad2003iterative}.

\section{Continuous estimates}\label{sec3}
We first establish uniform Fourier bounds and then transfer them to $\Omega$ by zero extension. This gives the continuous energy equivalence used in Sections \ref{sec4} and \ref{sec5}. Throughout Sections \ref{sec3}--\ref{sec5}, we set $t=\beta\delta^2$.

We use the Fourier transform $\widehat\varphi(\xi)=(2\pi)^{-d/2}\int_{\mathbb R^d}e^{-ix\cdot\xi}\varphi(x)\,dx$. And for an even, nonnegative general kernel $k\in L^1(B_1)$ and $\eta,\xi\in\mathbb R^d$, define
\begin{equation}\label{3.1}
	\Psi_k(\eta)
	=2\int_{B_1}k(z)\bigl(1-\cos(\eta\cdot z)\bigr)\,dz,
	\quad
	s_{\delta,k}(\xi)=\delta^{-2}\Psi_k(\delta\xi).
\end{equation}
Taking $k=\rho$ in \eqref{3.1}, the Fourier transform of the original nonlocal operator satisfies
$\widehat{-\mathscr L_\delta\varphi}(\xi) =s_{\delta,\rho}(\xi)\widehat\varphi(\xi)$, thus $s_{\delta,\rho}$ is the Fourier multiplier of $-\mathscr L_\delta$. For fixed $\rho$ and $\delta$, the Riemann--Lebesgue lemma then gives
$s_{\delta,\rho}(\xi)\to2\mu_\rho\delta^{-2}$ as $|\xi|\to\infty$. Consequently, $s_{\delta,\rho}(\xi)^{-1}\to
\delta^2/(2\mu_\rho)=\beta\delta^2$, which determines the scaling of the mass inverse term in \eqref{2.18}.

\begin{lemma}\label{lemma3.1}
	Suppose that $k$ is even, nonnegative, and for some \(0<r\le1\),
	\begin{equation}\label{3.2}
		\|k\|_{L^1(B_1)}\le M_\rho,\qquad
		k\ge\rho_*>0\quad\text{a.e. in }B_r.
	\end{equation}
	Then there exists a constant $c_d>0$, depending only on $d$, such that
	\begin{equation}\label{3.3}
		2\rho_*c_dr^{d+2}
		\frac{|\eta|^2}{1+|\eta|^2}
		\le\Psi_k(\eta)
		\le8M_\rho\frac{|\eta|^2}{1+|\eta|^2}
		\quad\forall\eta\in\mathbb R^d.
	\end{equation}
	Consequently,
	\begin{equation}\label{3.4}
		2\rho_*c_dr^{d+2}
		\frac{|\xi|^2}{1+\delta^2|\xi|^2}
		\le s_{\delta,k}(\xi)
		\le8M_\rho\frac{|\xi|^2}{1+\delta^2|\xi|^2}
		\quad\forall\xi\in\mathbb R^d,
	\end{equation}
	where \(M_\rho\) and \(\rho_*\) are the kernel-class constants in \eqref{2.2}.
\end{lemma}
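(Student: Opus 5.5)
We prove \eqref{3.3} by treating the regimes $|\eta|\le1$ and $|\eta|\ge1$ separately for each inequality. We then obtain \eqref{3.4} by substituting $\eta=\delta\xi$ and dividing by $\delta^2$, since $\delta^{-2}|\delta\xi|^2/(1+\delta^2|\xi|^2)=|\xi|^2/(1+\delta^2|\xi|^2)$.

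\emph{Upper bound.} We use two elementary inequalities, $1-\cos s\le s^2/2$ and $1-\cos s\le2$. Since $|z|\le1$ on $B_1$, they give
\[
\Psi_k(\eta)\le 2M_\rho\min\{|\eta|^2/2,\,2\}\le 4M_\rho\min\{|\eta|^2,1\}.
\]
Because $\min\{s,1\}\le 2s/(1+s)$ for $s\ge0$, we obtain $\Psi_k(\eta)\le 8M_\rho|\eta|^2/(1+|\eta|^2)$.

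\emph{Lower bound.} Since $k\ge0$ and $1-\cos\ge0$, we may discard the part of the integral outside $B_r$. This leaves
\[
\Psi_k(\eta)\ge 2\rho_*\int_{B_r}\bigl(1-\cos(\eta\cdot z)\bigr)\,dz
=2\rho_*r^d\int_{B_1}\bigl(1-\cos(r\eta\cdot w)\bigr)\,dw
=:2\rho_*r^dI(r\eta).
\]
It then suffices to show $I(\zeta)\ge c_d|\zeta|^2/(1+|\zeta|^2)$ for a dimensional constant $c_d$. Indeed, since $r\le1$, we have
\[
\frac{|r\eta|^2}{1+|r\eta|^2}\ge r^2\frac{|\eta|^2}{1+|\eta|^2},
\]
because $(1+|\eta|^2)\ge(1+r^2|\eta|^2)$. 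This yields the factor $r^{d+2}$.

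\emph{Bounding $I(\zeta)$.} The function $I$ is radial, so we may take $\zeta=se_1$ with $s\ge0$.

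For $s\le1$, we use $1-\cos\tau\ge\tau^2/4$ for $|\tau|\le1$, which follows from $\cos\tau\le1-\tau^2/2+\tau^4/24$. This gives $I\ge (s^2/4)\int_{B_1}w_1^2\,dw=c's^2$.

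For $s\ge1$, the Fourier transform of the indicator of the ball gives
\[
I(se_1)=\omega_d-\int_{B_1}\cos(sw_1)\,dw .
\]
The function $J(s):=\int_{B_1}\cos(sw_1)\,dw$ satisfies $|J(s)|<\omega_d$ for every $s>0$, since the cosine is nonconstant on the ball. Moreover $J(s)\to0$ as $s\to\infty$ by Riemann--Lebesgue. By continuity, $\inf_{s\ge1}(\omega_d-J(s))=:c''>0$ depends only on $d$.

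Combining the two regimes with $|\zeta|^2/(1+|\zeta|^2)\le\min\{|\zeta|^2,1\}$ gives the claim with $c_d=\min\{c',c''\}$.

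The main obstacle is this uniform lower bound for large $|\zeta|$. Continuity of $J$ together with its decay at infinity is the cleanest route, because it produces a constant depending only on $d$ without computing Bessel functions explicitly.
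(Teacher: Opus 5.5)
Your proof is correct and follows the paper's argument: you use the same $\min\{|\eta|^2/2,2\}$ upper bound, the same restriction to $B_r$ with the scaling $z=rw$ and the comparison $1+r^2|\eta|^2\le 1+|\eta|^2$, and a dimensional lower bound for $\int_{B_1}(1-\cos(s w_1))\,dw$ obtained from positivity, continuity and the behaviour as $s\to0$ and $s\to\infty$. The only cosmetic difference is that you split at $s=1$ and get the explicit small-$s$ constant $\omega_d/(4(d+2))$ from a Taylor bound, whereas the paper takes $c_d=\inf_{s>0}(1+s^2)s^{-2}\Phi_d(s)$ and uses its limits $\omega_d/[2(d+2)]$ and $\omega_d$.
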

\begin{proof}
	The fact that $|\eta\cdot z|\le|\eta|$ for $z\in B_1$, together with $0\le1-\cos\alpha\le\min\{\alpha^2/2,2\}$, implies  $\Psi_k(\eta)\le\min\{M_\rho|\eta|^2,4M_\rho\}$ and hence the upper bound in \eqref{3.3}.
	
	For the lower bound, we define
	\begin{equation}\label{3.5}
		\Phi_d(s)=\int_{B_1}\bigl(1-\cos(sz_1)\bigr)\,dz.
	\end{equation}
	Set
	\begin{equation}\label{3.6}
		c_d:=\inf_{s>0}
		\frac{1+s^2}{s^2}\Phi_d(s).
	\end{equation}
	The function $(1+s^2)s^{-2}\Phi_d(s)$ is positive and continuous on $(0,\infty)$. Its limits as $s\to0$ and $s\to\infty$ are $\omega_d/[2(d+2)]$ and $\omega_d$, respectively. Hence $c_d>0$. Then the lower bound in \eqref{3.2}, the rotational symmetry of $B_r$, and the substitution $z=r\zeta$ yield
	\begin{equation*}
			\Psi_k(\eta)
			\ge2\rho_*r^d\Phi_d(r|\eta|)\ge2\rho_*c_dr^{d+2}
			\frac{|\eta|^2}{1+r^2|\eta|^2}\ge2\rho_*c_dr^{d+2}
			\frac{|\eta|^2}{1+|\eta|^2},
	\end{equation*}
	where the last inequality follows from $r\le1$. Combining the above estimate with the upper bound proves \eqref{3.3}. Moreover, setting $\eta=\delta\xi$ then proves \eqref{3.4} and ends the proof.
\end{proof}

Next, for $\varphi\in L^2(\mathbb R^d)$ and $u\in L^2(\Omega)$, we define
\begin{equation}\label{3.7}
	\begin{aligned}
		p_{t,\mathbb R^d}(\varphi)
		&=\inf_{w\in H^1(\mathbb R^d)}
		\left\{\|\nabla w\|_{L^2(\mathbb R^d)}^2
		+t^{-1}\|\varphi-w\|_{L^2(\mathbb R^d)}^2\right\},\\
		p_{t,\Omega}(u)
		&=\inf_{v\in H_0^1(\Omega)}
		\left\{\|\nabla v\|_{L^2(\Omega)}^2
		+t^{-1}\|u-v\|_{L^2(\Omega)}^2\right\}.
	\end{aligned}
\end{equation}
The following lemma gives a uniform comparison between $p_{t,\Omega}(u)$ and $p_{t,\mathbb R^d}(E_0u)$.

\begin{lemma}\label{lemma3.2}
	Let $\Omega$ be a bounded Lipschitz domain. Then there exist constants $\delta_0>0$ and $C_\Omega\ge1$ such that
	\begin{equation}\label{3.8}
		p_{t,\mathbb R^d}(E_0u)
		\le p_{t,\Omega}(u)
		\le C_\Omega p_{t,\mathbb R^d}(E_0u)
	\end{equation}
	for every $u\in L^2(\Omega)$, $0<\beta\le\beta_+$, and $0<\delta\le\delta_0$. Here $\delta_0$ depends only on $\Omega$ and $\beta_+$, and $C_\Omega$ depends only on $\Omega$.
\end{lemma}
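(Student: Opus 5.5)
The lower inequality in \eqref{3.8} is immediate. For any $v\in H_0^1(\Omega)$, its zero extension $E_0v$ lies in $H^1(\mathbb R^d)$ with $\|\nabla E_0v\|_{L^2(\mathbb R^d)}=\|\nabla v\|_{L^2(\Omega)}$ and $\|E_0u-E_0v\|_{L^2(\mathbb R^d)}=\|u-v\|_{L^2(\Omega)}$. Hence $E_0v$ is admissible in the infimum defining $p_{t,\mathbb R^d}(E_0u)$, and taking the infimum over $v$ gives $p_{t,\mathbb R^d}(E_0u)\le p_{t,\Omega}(u)$.

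For the upper inequality, I fix $w\in H^1(\mathbb R^d)$ and construct $v\in H_0^1(\Omega)$ with
\begin{equation*}
\|\nabla v\|_{L^2(\Omega)}^2+t^{-1}\|u-v\|_{L^2(\Omega)}^2\le C_\Omega\Bigl(\|\nabla w\|_{L^2(\mathbb R^d)}^2+t^{-1}\|E_0u-w\|_{L^2(\mathbb R^d)}^2\Bigr).
\end{equation*}
The natural choice is a boundary cutoff at scale $s=\sqrt t=\sqrt{\beta}\,\delta$. Since $\Omega$ is Lipschitz, I take a Lipschitz cutoff $\chi_s$ with $0\le\chi_s\le1$, $\chi_s=1$ on $\{x\in\Omega:\operatorname{dist}(x,\partial\Omega)\ge s\}$, $\chi_s=0$ outside $\Omega$, and $|\nabla\chi_s|\le C/s$ (for example a regularized-distance construction). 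Then I set $v=\chi_sw$. The restriction $s\le\sqrt{\beta_+}\delta_0$ keeps $s$ below a threshold $s_0(\Omega)$ at which the boundary layer $\Omega_s^{\rm ext}=\{x:\operatorname{dist}(x,\partial\Omega)<s\}$ has uniform Lipschitz collar geometry; this is what fixes $\delta_0$.

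Next I estimate the two terms. For the fidelity term,
\begin{equation*}
\|u-\chi_sw\|_{L^2(\Omega)}\le\|u-w\|_{L^2(\Omega)}+\|(1-\chi_s)w\|_{L^2(\Omega)},
\end{equation*}
and for the gradient,
\begin{equation*}
\|\nabla v\|\le\|\nabla w\|+Cs^{-1}\|w\|_{L^2(\Omega\cap\Omega_s^{\rm ext})}.
\end{equation*}
Both reduce to the key estimate
\begin{equation*}
s^{-2}\|w\|_{L^2(\Omega_s^{\rm ext})}^2\le C\Bigl(\|\nabla w\|_{L^2(\mathbb R^d)}^2+s^{-2}\|w\|_{L^2(\mathbb R^d\setminus\Omega)}^2\Bigr),
\end{equation*}
where $\Omega_s^{\rm ext}$ denotes the two-sided layer. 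The last term is harmless because $E_0u=0$ outside $\Omega$, so $\|w\|_{L^2(\mathbb R^d\setminus\Omega)}=\|E_0u-w\|_{L^2(\mathbb R^d\setminus\Omega)}$, and $s^{-2}=t^{-1}$.

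The key estimate is the main obstacle. I would prove it locally in Lipschitz graph charts by a one-dimensional Poincaré/Friedrichs argument along the transversal direction. For a point at depth $\tau\le s$ inside $\Omega$, move to the outside along the fixed transversal vector over a segment of length comparable to $s$, which stays in the exterior strip of width $\sim s$. The fundamental theorem of calculus and Cauchy--Schwarz then give
\begin{equation*}
|w(x)|^2\le 2|w(x')|^2+Cs\int|\partial w|^2,
\end{equation*}
with $x'$ the exterior point. Averaging over $x'$ in a segment of length $s$ and integrating over the layer yields
\begin{equation*}
\|w\|_{L^2(\Omega_s^{\rm ext})}^2\le C\|w\|_{L^2(\text{ext. strip})}^2+Cs^2\|\nabla w\|^2.
\end{equation*}
A finite partition of unity subordinate to the Lipschitz charts patches the local bounds together. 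Its gradient contributions are $O(1)$ rather than $O(1/s)$, so they are absorbed using $s\le s_0$ and $\|w\|_{L^2}^2\le s_0^2s^{-2}\|w\|_{L^2}^2$ on the chart overlaps. The constant depends only on the Lipschitz character of $\Omega$. Taking the infimum over $w$ then gives the upper inequality in \eqref{3.8}, with $C_\Omega$ depending only on $\Omega$ and $\delta_0$ depending on $\Omega$ and $\beta_+$.
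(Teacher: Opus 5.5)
Your proposal is correct and follows essentially the same route as the paper: zero extension for the lower bound, the cutoff $v=\chi_{\varepsilon}w|_\Omega$ at scale $\varepsilon=\sqrt t$, and the boundary-layer estimate \eqref{3.9} obtained from the fundamental theorem of calculus along the transversal direction in Lipschitz graph charts, with $\delta_0$ fixed by $\sqrt{\beta_+}\,\delta_0\le\varepsilon_0$. The differences are minor: you average over an exterior segment where the paper uses the single reflected point, and the paper patches the charts by applying the local estimate to $w$ itself and summing over a finite cover with bounded overlap. That avoids the cutoff-gradient terms of your partition of unity and hence your absorption step. As you set that step up, it also needs the inward parts of your transversal segments to stay inside the layer on the left-hand side.
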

\begin{proof}
	First, for any $v\in H_0^1(\Omega)$, extend $v$ by zero outside $\Omega$. Then $E_0v\in H^1(\mathbb R^d)$. Substituting $E_0v$ into the first minimization in \eqref{3.7} and taking the infimum over $v$ gives $p_{t,\mathbb R^d}(E_0u)\le p_{t,\Omega}(u)$.
	
	To prove the second inequality in \eqref{3.8}, set $\Omega_\varepsilon=\{x\in\Omega: \operatorname{dist}(x,\partial\Omega)<\varepsilon\}$. Cover $\partial\Omega$ by finitely many Lipschitz coordinate neighborhoods. After a translation and rotation in the $j$th neighborhood, the boundary is represented by $x_d=\psi_j(s)$ and $\Omega$ is locally given by $x_d>\psi_j(s)$, where $s\in\mathbb R^{d-1}$. Set $\tau=x_d-\psi_j(s)$. Since $\tau$ is comparable to the distance from the boundary in each Lipschitz coordinate neighborhood, there exist $L,\varepsilon_0>0$ such that every point in $\Omega_\varepsilon$, with $0<\varepsilon\le\varepsilon_0$, lies in one of these neighborhoods, satisfies $0<\tau<L\varepsilon$, and has its reflected point $(s,\psi_j(s)-\tau)$ in the same neighborhood. Since $\Omega$ is locally given by $x_d>\psi_j(s)$, the reflected point lies outside $\Omega$. The reflection allows us to compare the interior and exterior values of $w$ near the boundary.
	
	For smooth $w$, define $\widetilde w_j(s,\tau)=w(s,\psi_j(s)+\tau)$. Applying the fundamental theorem of calculus between $-\tau$ and $\tau$, followed by the Cauchy--Schwarz inequality, gives
	\begin{equation*}
		|\widetilde w_j(s,\tau)|^2
		\le2|\widetilde w_j(s,-\tau)|^2
		+4\tau\int_{-\tau}^{\tau}|\partial_\sigma\widetilde w_j(s,\sigma)|^2\,d\sigma,
		\qquad 0<\tau<L\varepsilon.
	\end{equation*}
	The change of variables $x=(s,\psi_j(s)+\tau)$ has unit Jacobian, and $|\partial_\tau\widetilde w_j|\le|\nabla w|$. Integrating the above local estimate over the finitely many coordinate neighborhoods and using their bounded overlap gives the following estimate for smooth $w$. Since smooth functions are dense in $H^1(\mathbb R^d)$, the same estimate holds for every $w\in H^1(\mathbb R^d)$:
	\begin{equation}\label{3.9}
		\|w\|_{L^2(\Omega_\varepsilon)}^2
		\le C_\Omega\left(
		\|w\|_{L^2(\mathbb R^d\setminus\Omega)}^2
		+\varepsilon^2\|\nabla w\|_{L^2(\mathbb R^d)}^2
		\right)
		\quad\forall w\in H^1(\mathbb R^d).
	\end{equation}
	Set $\delta_0=\varepsilon_0/\sqrt{\beta_+}$. For $0<\beta\le\beta_+$ and $0<\delta\le\delta_0$, we have $\sqrt t=\sqrt\beta\,\delta\le\varepsilon_0$. Take $\varepsilon=\sqrt t$, set $\chi_\varepsilon(x)=\min\{1,\operatorname{dist}(x,\partial\Omega)/\varepsilon\}$, and let $v=\chi_\varepsilon w|_\Omega\in H_0^1(\Omega)$. Since $\chi_\varepsilon=1$ on $\Omega\setminus\Omega_\varepsilon$ and $|\nabla\chi_\varepsilon|\le\varepsilon^{-1}$, the product rule and \eqref{3.9} give
	\begin{equation}\label{3.10}
		\|\nabla v\|_{L^2(\Omega)}^2
		+t^{-1}\|u-v\|_{L^2(\Omega)}^2
		\le C_\Omega\left(
		\|\nabla w\|_{L^2(\mathbb R^d)}^2
		+t^{-1}\|E_0u-w\|_{L^2(\mathbb R^d)}^2
		\right).
	\end{equation}
	Taking the infimum over $w$ in \eqref{3.10} proves the second inequality in \eqref{3.8} and ends the proof.
\end{proof}

We next establish a uniform energy equivalence on $\Omega$. Let $k\in L^1(B_1)$ be even and nonnegative, and define
\begin{equation}\label{3.11}
	\begin{aligned}
		a_{\delta,k}(u,v)
		&=\int_{\mathbb R^d}\int_{B_\delta(x)}
		\delta^{-(d+2)}k\!\left(\frac{y-x}{\delta}\right)
		\bigl(E_0u(y)-E_0u(x)\bigr)\\
		&\hspace{35mm}\times
		\bigl(E_0v(y)-E_0v(x)\bigr)\,dy\,dx .
	\end{aligned}
\end{equation}
The following theorem shows that $a_{\delta,k}(u,u)$ is uniformly equivalent to $p_{t,\Omega}(u)$.
\begin{theorem}\label{theorem3.3}
	Let $\Omega$ be a bounded Lipschitz domain and let $k$ satisfy the assumptions of Lemma \ref{lemma3.1}. Then for $\beta\in[\beta_-,\beta_+]$ and $0<\delta\le\delta_0$, where $\delta_0$ is given by Lemma \ref{lemma3.2}, we have
	\begin{equation}\label{3.12}
		c_0(r)p_{t,\Omega}(u)
		\le a_{\delta,k}(u,u)
		\le C_0p_{t,\Omega}(u)
		\quad\forall u\in L^2(\Omega),
	\end{equation}
	where
	\begin{equation}\label{3.13}
		c_0(r)=
		\frac{2\rho_*c_dr^{d+2}\min\{1,\beta_-\}}{C_\Omega},
		\qquad
		C_0=8M_\rho\max\{1,\beta_+\}.
	\end{equation}
\end{theorem}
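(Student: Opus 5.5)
The plan is to work in Fourier space on $\mathbb R^d$ and then transfer the bounds to $\Omega$ through Lemma \ref{lemma3.2}.

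\textbf{Step 1 (Plancherel).} Since $k$ is even, Plancherel's identity gives, for $u\in L^2(\Omega)$ and $\varphi=E_0u$,
\begin{equation*}
a_{\delta,k}(u,u)=\int_{\mathbb R^d}s_{\delta,k}(\xi)|\widehat\varphi(\xi)|^2\,d\xi .
\end{equation*}
To see this, expand $(\varphi(x+\delta z)-\varphi(x))^2$ and integrate in $x$; this yields $2\int(1-\cos(\delta\xi\cdot z))|\widehat\varphi|^2$, which matches \eqref{3.1}.

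\textbf{Step 2 (Fourier form of the infimum).} Compute $p_{t,\mathbb R^d}(\varphi)$ explicitly. The minimization in \eqref{3.7} decouples frequency by frequency: for each $\xi$ one minimizes $|\xi|^2|\widehat w|^2+t^{-1}|\widehat\varphi-\widehat w|^2$ over $\widehat w$. The minimum is $\frac{|\xi|^2}{1+t|\xi|^2}|\widehat\varphi|^2$, attained at $\widehat w=\widehat\varphi/(1+t|\xi|^2)$, and this $w$ lies in $H^1$. Hence
\begin{equation*}
p_{t,\mathbb R^d}(\varphi)=\int\frac{|\xi|^2}{1+t|\xi|^2}|\widehat\varphi|^2\,d\xi .
\end{equation*}

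\textbf{Step 3 (compare the multipliers).} Combine \eqref{3.4} with the comparison between $\frac{|\xi|^2}{1+\delta^2|\xi|^2}$ and $\frac{|\xi|^2}{1+\beta\delta^2|\xi|^2}$. Writing $\lambda=\delta^2|\xi|^2$, the ratio $\frac{1+\beta\lambda}{1+\lambda}$ lies between $\min\{1,\beta\}$ and $\max\{1,\beta\}$. Since $\beta\in[\beta_-,\beta_+]$, this gives
\begin{equation*}
2\rho_*c_dr^{d+2}\min\{1,\beta_-\}\,p_{t,\mathbb R^d}(E_0u)\le a_{\delta,k}(u,u)\le 8M_\rho\max\{1,\beta_+\}\,p_{t,\mathbb R^d}(E_0u).
\end{equation*}

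\textbf{Step 4 (transfer to $\Omega$).} Apply \eqref{3.8}. For the upper bound, use $p_{t,\mathbb R^d}(E_0u)\le p_{t,\Omega}(u)$, which gives the constant $C_0$. For the lower bound, use $p_{t,\mathbb R^d}(E_0u)\ge C_\Omega^{-1}p_{t,\Omega}(u)$, valid for $\delta\le\delta_0$, which gives $c_0(r)$ as in \eqref{3.13}.

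\textbf{Main obstacle.} This is Step 2: justifying that the infimum over $H^1$ equals the pointwise Fourier minimum. One must check that the pointwise minimizer $\widehat w$ defines an $H^1$ function. It does, because $|\xi|\,|\widehat w|\le|\xi||\widehat\varphi|/(1+t|\xi|^2)\le C_t|\widehat\varphi|$. Once that is in place, the remaining steps are bookkeeping of constants.
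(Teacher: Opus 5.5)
Your proposal is correct and follows the paper's proof step for step: the Plancherel--Tonelli identity \eqref{3.14}, the frequency-wise minimization giving \eqref{3.15} together with the check that the minimizer $\widehat w=(1+t|\xi|^2)^{-1}\widehat\varphi$ lies in $H^1(\mathbb R^d)$, the multiplier comparison \eqref{3.16} for $\beta\in[\beta_-,\beta_+]$, and the transfer to $\Omega$ via Lemma~\ref{lemma3.2}. One small remark: Step 1 does not actually need the evenness of $k$, since $|e^{i\delta z\cdot\xi}-1|^2=2\bigl(1-\cos(\delta z\cdot\xi)\bigr)$ holds for every $z$.
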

\begin{proof}
	According to Plancherel's identity and Tonelli's theorem, for every $\varphi\in L^2(\mathbb R^d)$, we have
	\begin{equation}\label{3.14}
		\int_{\mathbb R^d}\int_{B_\delta(x)}
		\delta^{-(d+2)}k\!\left(\frac{y-x}{\delta}\right)
		|\varphi(y)-\varphi(x)|^2\,dy\,dx
		=\int_{\mathbb R^d}s_{\delta,k}(\xi)
		|\widehat\varphi(\xi)|^2\,d\xi.
	\end{equation}
	Moreover, applying Plancherel's identity to $\|\nabla w\|_{L^2(\mathbb R^d)}^2$ and
	$\|\varphi-w\|_{L^2(\mathbb R^d)}^2$ in \eqref{3.7}, and using $\widehat{\partial_jw}(\xi)=i\xi_j\widehat w(\xi)$, we obtain
	\begin{equation*}
		p_{t,\mathbb R^d}(\varphi)
		=\inf_{w\in H^1(\mathbb R^d)}
		\int_{\mathbb R^d}
		\left(
		|\xi|^2|\widehat w(\xi)|^2
		+t^{-1}|\widehat\varphi(\xi)-\widehat w(\xi)|^2
		\right)d\xi.
	\end{equation*}
	For each fixed $\xi$, minimizing
	$|\xi|^2|\widehat w(\xi)|^2
	+t^{-1}|\widehat\varphi(\xi)-\widehat w(\xi)|^2$
	with respect to $\widehat w(\xi)$ gives
	$\widehat w(\xi)=(1+t|\xi|^2)^{-1}\widehat\varphi(\xi)$, with
	minimum value
	$|\xi|^2(1+t|\xi|^2)^{-1}|\widehat\varphi(\xi)|^2$.
	By Plancherel's identity,
	\begin{equation*}
		\|w\|_{H^1(\mathbb R^d)}^2
		=\int_{\mathbb R^d}
		\frac{1+|\xi|^2}{(1+t|\xi|^2)^2}
		|\widehat\varphi(\xi)|^2\,d\xi
		\le C_t\|\varphi\|_{L^2(\mathbb R^d)}^2,
	\end{equation*}
	where $C_t>0$ depends only on $t$. Hence
	$w\in H^1(\mathbb R^d)$. Substituting the minimum value into the
	integral gives
	\begin{equation}\label{3.15}
		p_{t,\mathbb R^d}(\varphi)
		=\int_{\mathbb R^d}
		\frac{|\xi|^2}{1+t|\xi|^2}|\widehat\varphi(\xi)|^2\,d\xi.
	\end{equation}
	Using $t=\beta\delta^2$ and $\beta\in[\beta_-,\beta_+]$, we can obtain	
	\begin{equation}\label{3.16}
		\min\{1,\beta_-\}\frac{|\xi|^2}{1+t|\xi|^2}
		\le\frac{|\xi|^2}{1+\delta^2|\xi|^2}
		\le\max\{1,\beta_+\}\frac{|\xi|^2}{1+t|\xi|^2}.
	\end{equation}
	Then combining \eqref{3.4} in Lemma \ref{lemma3.1} with \eqref{3.14}, \eqref{3.15}, \eqref{3.16} and taking $\varphi=E_0u$ gives
	\begin{equation*}
		2\rho_*c_dr^{d+2}\min\{1,\beta_-\}
		p_{t,\mathbb R^d}(E_0u)
		\le a_{\delta,k}(u,u)
		\le8M_\rho\max\{1,\beta_+\}
		p_{t,\mathbb R^d}(E_0u).
	\end{equation*}
	
	Finally, Lemma \ref{lemma3.2} gives $C_\Omega^{-1}p_{t,\Omega}(u)\le p_{t,\mathbb R^d}(E_0u)\le p_{t,\Omega}(u)$. Substitution into the above bounds therefore proves \eqref{3.12} and \eqref{3.13}. 
\end{proof}

We now use Theorem \ref{theorem3.3} to prove that the variational problem \eqref{liftproblem} has a unique solution. By the Poincaré inequality, there exists a constant $C_P>0$, depending only on $\Omega$, such that $\|v\|_{L^2(\Omega)}\le C_P\|\nabla v\|_{L^2(\Omega)}$ for every $v\in H_0^1(\Omega)$. This inequality and \eqref{3.7} give $p_{t,\Omega}(u)\ge(C_P^2+t)^{-1}\|u\|_{L^2(\Omega)}^2$. Therefore, the lower bound in Theorem \ref{theorem3.3}, together with this estimate and $t\le\beta_+\delta_0^2$, gives

\begin{equation}\label{3.17}
	\begin{aligned}
		a_{\delta,k}(u,u)
		&\ge c_0(r)p_{t,\Omega}(u)\ge\frac{c_0(r)}{C_P^2+t}\|u\|_{L^2(\Omega)}^2\\
		&\ge\frac{c_0(r)}{C_P^2+\beta_+\delta_0^2}
		\|u\|_{L^2(\Omega)}^2
		\quad\forall u\in L^2(\Omega).
	\end{aligned}
\end{equation}

Taking $k=\rho$ and $r=r_*$ in \eqref{3.17} gives coercivity of $a_\delta$, and the bound $|a_\delta(u,v)|\le4\mu_\rho\delta^{-2}\|u\|_{L^2(\Omega)}\|v\|_{L^2(\Omega)}$ gives continuity. The Lax--Milgram lemma therefore gives a unique solution $w_\delta\in L^2(\Omega)$ to \eqref{liftproblem} for $0<\delta\le\delta_0$.
\section{Discrete approximation}\label{sec4}
We first establish the properties of the averaged kernel $\rho^{\rm av}$ defined in \eqref{2.8}, the exactness of the pair-cell assembly, and fixed-horizon convergence. We then derive the discrete energy equivalence and mass comparison used in Section \ref{sec5}.
\subsection{Pair-cell assembly and convergence}
\begin{proposition}\label{proposition4.1}
	Assume \eqref{2.1}, \eqref{2.2}, and the geometric and discretization assumptions of Section \ref{sec2.2} hold. Then the kernel $\rho^{\rm av}$ in \eqref{2.8}, extended by zero outside $P$, is even, nonnegative, and satisfies
	\begin{equation}\label{4.1}
		\|\rho^{\rm av}\|_{L^1(B_1)}\le M_\rho,\qquad
		\rho^{\rm av}(z)\ge\rho_*
		\quad\text{for a.e. }z\in B_{r_*/2}.
	\end{equation}
	Moreover, the matrix $A_h$ is symmetric positive definite for every $\delta>0$, and the pair-cell assembly in Section \ref{sec2.2} evaluates $A_h$ and the lifting contribution to $b_h$ exactly.
\end{proposition}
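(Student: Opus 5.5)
The proof splits into three parts: the properties of $\rho^{\rm av}$ in \eqref{4.1}, the symmetric positive definiteness of $A_h$, and exactness of the pair-cell assembly.

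\emph{Properties of $\rho^{\rm av}$.} Evenness follows from the symmetry $Z\in\mathscr Z_q\Rightarrow -Z\in\mathscr Z_q$, together with $\rho(-z)=\rho(z)$ and $B_1=-B_1$. These give $\mu_{-Z}=\mu_Z$ and $|{-Z}|=|Z|$, hence $\rho^{\rm av}(-z)=\rho^{\rm av}(z)$ for a.e.\ $z$. Nonnegativity is immediate from $\rho\ge0$. For the $L^1$ bound, $\rho^{\rm av}$ is supported in $P\subset\overline{B_1}$, and summing $\int_Z\rho^{\rm av}=\mu_Z$ over the partition gives
\[
\int\rho^{\rm av}=\int_{P\cap B_1}\rho\le\|\rho\|_{L^1(B_1)}\le M_\rho .
\]
For the lower bound, take any cell $Z$ meeting $B_{r_*/2}$ in positive measure. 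By \eqref{2.7}, $\operatorname{diam}Z\le q\le r_*/2$, so $Z\subset B_{r_*}\subset B_1$. Then $\mu_Z=\int_Z\rho\ge\rho_*|Z|$ by \eqref{2.2}, hence $\rho^{\rm av}|_Z\ge\rho_*$. Because $B_{1/2}\subset P$ and $r_*\le1$, the cells cover $B_{r_*/2}$ up to a null set, which gives \eqref{4.1}.

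\emph{Symmetric positive definiteness of $A_h$.} Symmetry is clear from the formula, since the integrand is symmetric in $i,j$. For $X\in\mathbb R^{N_h}$ set $v_h=\sum_iX_i\phi_i$; then $X^TA_hX=a_{\delta,\rho^{\rm av}}(v_h,v_h)\ge0$. To get definiteness for every $\delta>0$, and not only $\delta\le\delta_0$, I would argue directly rather than through Theorem~\ref{theorem3.3}. Suppose the form vanishes. Since $\rho^{\rm av}\ge\rho_*$ on $B_{r_*/2}$, we get $E_0v_h(y)=E_0v_h(x)$ for a.e.\ $(x,y)$ with $|y-x|<\delta r_*/2$. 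A continuous function that is locally constant in this sense is constant on $\mathbb R^d$, and since $E_0v_h$ vanishes outside the bounded set $\Omega$, we conclude $v_h=0$ and so $X=0$. Alternatively one could apply Theorem~\ref{theorem3.3} with $k=\rho^{\rm av}$ and $r=r_*/2$, but that only covers $\delta\le\delta_0$, so the direct argument is cleaner.

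\emph{Exactness of the assembly.} The pair cells $\mathcal C_{TSZ}$ are intersections of polytopes in $\mathbb R^{2d}$: $T\times S$, the preimage of $\delta Z$ under the linear map $(x,y)\mapsto y-x$, and the pieces of $\mathcal I_{\delta,P}$. Each $\mathcal I_{\delta,P}$ piece is a polytope once restricted to $T\times S$, because $T$ and $S$ each lie entirely in $\Omega$ or in the exterior mesh. Hence each pair cell is a polytope and admits a simplicial subdivision, and the pair cells tile $\mathcal I_{\delta,P}$ up to null sets. On each pair cell $\rho^{\rm av}$ is constant and the difference factors are affine, so the integrand is a quadratic polynomial.

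It remains to verify that rule \eqref{2.11} is exact for degree $\le2$. Using barycentric coordinates, it suffices to check the moments
\[
\int_R\lambda_a=\frac{|R|}{2d+1},\qquad \int_R\lambda_a^2=\frac{2|R|}{(2d+1)(2d+2)},\qquad \int_R\lambda_a\lambda_b=\frac{|R|}{(2d+1)(2d+2)}\quad(a\ne b).
\]
These follow from the three identities stated after \eqref{2.12}, which are short computations. Summing over simplices yields \eqref{2.14}, which is exact, and the same argument gives exactness for the lifting term.

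The main obstacle is the bookkeeping in the lower bound $\rho^{\rm av}\ge\rho_*$: one must show that the cells meeting $B_{r_*/2}$ lie inside $B_{r_*}$, which is where the restriction $q\le r_*/2$ enters. A secondary point is the polytope structure of $\mathcal C_{TSZ}$ near $\partial\Omega$.
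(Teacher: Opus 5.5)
Your proof is correct. For the kernel properties and for the exactness of the pair-cell assembly it follows the paper's argument: evenness from the symmetric partition, the $L^1$ bound by summing cell masses, and the lower bound from $\operatorname{diam}Z\le q\le r_*/2$ forcing $Z\subset B_{r_*}$. You also supply two details the paper leaves implicit: the polytope structure of $\mathcal C_{TSZ}$, and the barycentric moment check of \eqref{2.11}.

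The real difference is in the definiteness of $A_h$. The paper does not go through Theorem~\ref{theorem3.3}. It applies Lemma~\ref{lemma3.1} with $k=\rho^{\rm av}$ and $r=r_*/2$ to get $s_{\delta,\rho^{\rm av}}(\xi)>0$ for $\xi\ne0$. It then uses the Plancherel identity \eqref{3.14} to write $a_{\delta,\rho^{\rm av}}(u_h,u_h)=\int s_{\delta,\rho^{\rm av}}(\xi)\,|\widehat{E_0u_h}(\xi)|^2\,d\xi$, which is positive for $u_h\ne0$. Neither ingredient involves $\delta_0$, so the paper's route also covers every $\delta>0$. Your reason for avoiding the Fourier machinery therefore applies only to Theorem~\ref{theorem3.3}, not to what the paper actually uses.

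Your real-space argument runs as follows: a vanishing integrand together with $\rho^{\rm av}\ge\rho_*$ on $B_{r_*/2}$ makes $E_0v_h$ locally constant, hence constant, hence zero by bounded support. This is more elementary and self-contained. It does rely on the continuity of $E_0v_h$ to upgrade the a.e.\ identity to a pointwise one; that is valid here because $v_h\in V_h^0$ vanishes on $\partial\Omega$.

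The Fourier route, by contrast, gives strict positivity of the form on all of $L^2(\Omega)$ without any regularity. It also reuses estimates the paper needs anyway for Theorem~\ref{theorem2.2}.
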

\begin{proof}
	By \eqref{2.8}, $\rho^{\rm av}$ is nonnegative. Since $\rho$ is even and the kernel partition is symmetric, $\mu_Z=\mu_{-Z}$ and $|Z|=|-Z|$, so $\rho^{\rm av}$ is even. Summing the cell masses gives
	\begin{equation*}
		\|\rho^{\rm av}\|_{L^1(B_1)}
		=\sum_{Z\in\mathscr Z_q}\mu_Z
		=\int_{P\cap B_1}\rho(z)\,dz
		\le M_\rho.
	\end{equation*}
	
	To prove the lower bound in \eqref{4.1}, take $z\in B_{r_*/2}$. Since $B_{r_*/2}\subset B_{1/2}\subset P$ and $\mathscr Z_q$ partitions $P$, there exists a cell $Z\in\mathscr Z_q$ containing $z$. Since $z\in B_{r_*/2}$ and $\operatorname{diam}Z\le q\le r_*/2$ by \eqref{2.7}, every $\zeta\in Z$ satisfies $|\zeta|\le|z|+\operatorname{diam}Z<r_*$. Thus $Z\subset B_{r_*}$, and \eqref{2.2} gives $\rho\ge\rho_*$ almost everywhere on $Z$. The cell average defined in \eqref{2.8} therefore satisfies $\rho^{\rm av}|_Z\ge\rho_*$, proving the lower bound in \eqref{4.1}.
	
	Moreover, for $u_h,v_h\in V_h^0$ with coefficient vectors $U,V$, the definition of $A_h$ gives
	\begin{equation}\label{4.2}
		U^TA_hV
		=a_{\delta,\rho^{\rm av}}(u_h,v_h).
	\end{equation}
	The bilinear form in \eqref{4.2} is symmetric. Taking $k=\rho^{\rm av}$ and $r=r_*/2$ in \eqref{3.4} gives $s_{\delta,\rho^{\rm av}}(\xi)>0$ for $\xi\ne0$. Then by \eqref{3.14}, $a_{\delta,\rho^{\rm av}}(u_h,u_h)>0$ whenever $u_h\ne0$. Equation \eqref{4.2} therefore shows that $A_h$ is symmetric positive definite for every $\delta>0$.
	
	Finally, on each pair cell $\mathcal C_{TSZ}$, $\rho^{\rm av}((y-x)/\delta)$ is constant. Since $E_0\phi_i$ and $\ell_{\delta,h}$ are affine on each mesh element, the differences $E_0\phi_i(y)-E_0\phi_i(x)$ and $\ell_{\delta,h}(y)-\ell_{\delta,h}(x)$ are affine in $(x,y)$ on this pair cell. Thus the integrands defining $A_h$ and the lifting term in $b_h$ are polynomials of total degree at most two. The quadrature rule \eqref{2.11} therefore evaluates their integrals exactly on each simplex. Thus the pair-cell assembly evaluates $A_h$ and the lifting term in $b_h$ exactly. 
\end{proof}
\begin{theorem}\label{theorem4.2}
	 Fix $0<\delta\le\delta_0$. Let $P_h$, $\mathscr Z_{q_h}$, and $\rho_h^{\rm av}$ denote the quantities $P$, $\mathscr Z_q$, and $\rho^{\rm av}$ from Section \ref{sec2.2} at mesh size $h$, and let $F_h\in L^2(\Omega)$. Assume that the hypotheses of Proposition \ref{proposition4.1} hold for each mesh in the family of Section \ref{sec2.2} and that
	 \begin{equation*}
	 	|B_1\setminus P_h|\to0,
	 	\qquad q_h\to0,
	 	\qquad \|F_h-F_\delta\|_{L^2(\Omega)}\to0
	 	\quad\text{as }h\to0.
	 \end{equation*}
	 Then the Galerkin solution $w_h\in V_h^0$ of \eqref{galerkin} converges in $L^2(\Omega)$ to the unique solution $w_\delta$ of \eqref{liftproblem}.
\end{theorem}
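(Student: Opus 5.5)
The argument is a Strang-type estimate for a variational crime at fixed $\delta$, posed in $L^2(\Omega)$. With $\delta$ fixed, $a_\delta$ and the perturbed forms $a_h:=a_{\delta,\rho_h^{\rm av}}$ are bounded on $L^2(\Omega)$ and uniformly coercive there.

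\textbf{Step 1: uniform coercivity and boundedness.} By Proposition \ref{proposition4.1}, every $\rho_h^{\rm av}$ satisfies the hypotheses of Lemma \ref{lemma3.1} with $r=r_*/2$ and the same constants $M_\rho,\rho_*$. Estimate \eqref{3.17} therefore gives a constant $\alpha>0$, independent of $h$, with $a_h(u,u)\ge\alpha\|u\|_{L^2(\Omega)}^2$. Since $\|\rho_h^{\rm av}\|_{L^1}\le M_\rho$, we also have $|a_h(u,v)|\le 4M_\rho\delta^{-2}\|u\|\|v\|$.

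\textbf{Step 2: Strang splitting.} For any $v_h\in V_h^0$, coercivity applied to $w_h-v_h$ gives
\begin{equation*}
\alpha\|w_h-v_h\|^2\le a_h(w_h-v_h,w_h-v_h)=(F_h,e_h)-a_h(v_h,e_h),\qquad e_h=w_h-v_h .
\end{equation*}
Using $(F_\delta,e_h)=a_\delta(w_\delta,e_h)$, the right-hand side equals
\begin{equation*}
(F_h-F_\delta,e_h)+a_h(w_\delta-v_h,e_h)+(a_\delta-a_h)(w_\delta,e_h).
\end{equation*}
This yields
\begin{equation*}
\|w_h-w_\delta\|\lesssim \|F_h-F_\delta\|+\inf_{v_h}\|w_\delta-v_h\|+\sup_{\|e\|=1}|(a_\delta-a_h)(w_\delta,e)|,
\end{equation*}
with constants depending on $\delta$, which is fixed.

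\textbf{Step 3: the three terms.}
\begin{itemize}
\item The first term tends to zero by hypothesis.
\item The second tends to zero because $\bigcup_h V_h^0$ is dense in $L^2(\Omega)$: $C_c^\infty(\Omega)$ is dense, and nodal interpolants converge as $h\to0$.
\item For the third, write $(a_\delta-a_h)(u,e)=\int\!\!\int \delta^{-(d+2)}(\rho-\rho_h^{\rm av})\bigl((y-x)/\delta\bigr)\,D u\,D e$, with $\rho$ and $\rho_h^{\rm av}$ both extended by zero outside $B_1$, and $Du=E_0u(y)-E_0u(x)$. By Cauchy--Schwarz/Young, with $z=(y-x)/\delta$, this is bounded by $4\delta^{-2}\|\rho-\rho_h^{\rm av}\|_{L^1(\mathbb R^d)}\|u\|\|e\|$.
\end{itemize}

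The \textbf{main obstacle} is showing $\|\rho-\rho_h^{\rm av}\|_{L^1}\to0$ for a merely $L^1$ kernel. Split the error into the part on $B_1\setminus P_h$ and the part on $P_h$.
\begin{itemize}
\item On $B_1\setminus P_h$ the error is $\int_{B_1\setminus P_h}\rho$, which tends to zero by absolute continuity of the integral since $|B_1\setminus P_h|\to0$.
\item On $P_h$, note that $\rho_h^{\rm av}$ is the cell average of $\rho\mathbf 1_{B_1}$. The parts of $P_h$ outside $B_1$ have measure zero because $P_h\subset\overline{B_1}$. Let $\Pi_h$ denote the cell-averaging operator. It is an $L^1$ contraction. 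Given $\varepsilon$, pick a continuous $g$ on $\overline{B_1}$ with $\|\rho-g\|_{L^1}<\varepsilon$. Then
\begin{equation*}
\|\rho-\Pi_h\rho\|_{L^1(P_h)}\le 2\varepsilon+\|g-\Pi_hg\|_{L^1}\le 2\varepsilon+\omega_d\,\mathrm{osc}(g,q_h),
\end{equation*}
where $\mathrm{osc}(g,q_h)$ is the modulus of continuity of $g$ at scale $q_h$. This tends to $2\varepsilon$ because $q_h\to0$ and $g$ is uniformly continuous.
\end{itemize}
Combining the three limits gives $w_h\to w_\delta$ in $L^2(\Omega)$. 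The solution $w_\delta$ is unique by the Lax--Milgram argument following \eqref{3.17}.
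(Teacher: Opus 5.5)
Your proposal is correct and follows essentially the same route as the paper. It uses uniform $L^2$-coercivity from \eqref{3.17} with $r=r_*/2$, the same Strang-type splitting of $a_{\delta,\rho_h^{\rm av}}(e_h,e_h)$ into approximation, kernel-difference and load-difference terms, and the bound $4\delta^{-2}\|\rho-\rho_h^{\rm av}\|_{L^1}\|u\|\|v\|$. It also proves $\|\rho-\rho_h^{\rm av}\|_{L^1}\to0$ the same way, via the $B_1\setminus P_h$ split, $L^1$-stability of cell averaging and a continuous approximant's modulus of continuity (your use of $\Pi_h$ for cell averaging merely clashes with the paper's $L^2$-projection notation).
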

\begin{proof}
	We first show that $\varepsilon_h:=\|\rho_h^{\rm av}-\rho\|_{L^1(B_1)}\to0$. For any $\varphi\in C(\overline{B_1})$, define $\omega_\varphi(t)=\sup\{|\varphi(z)-\varphi(\zeta)|:z,\zeta\in\overline{B_1},\ |z-\zeta|\le t\}$ for $t\ge0$. The $L^1$ stability of cell averaging and the triangle inequality give
	\begin{equation*}
		\begin{aligned}
			\varepsilon_h
			&\le
			\int_{B_1\setminus P_h}|\rho(z)|\,dz
			+\sum_{Z\in\mathscr Z_{q_h}}
			\left(2\int_Z|\rho(z)-\varphi(z)|\,dz
			+|Z|\,\omega_\varphi(q_h)\right)\\
			&\le
			\int_{B_1\setminus P_h}|\rho(z)|\,dz
			+2\|\rho-\varphi\|_{L^1(B_1)}
			+|B_1|\,\omega_\varphi(q_h).
		\end{aligned}
	\end{equation*}
	The first term tends to zero since $\rho\in L^1(B_1)$ and $|B_1\setminus P_h|\to0$. Moreover, since continuous functions can approximate $\rho$ arbitrarily well in $L^1(B_1)$, we can choose $\varphi$ to make the second term arbitrarily small. With $\varphi$ fixed, the last term tends to zero by the uniform continuity of $\varphi$ and $q_h\to0$. Hence $\varepsilon_h\to0$.

	For $u,v\in L^2(\Omega)$, \eqref{3.11} and the Cauchy--Schwarz inequality give
	\begin{equation}\label{kernel-difference}
		|a_{\delta,\rho_h^{\rm av}}(u,v)-a_\delta(u,v)|
		\le4\delta^{-2}\varepsilon_h
		\|u\|_{L^2(\Omega)}\|v\|_{L^2(\Omega)}.
	\end{equation}
	For any $v_h\in V_h^0$, set $e_h=w_h-v_h$. The variational equations \eqref{galerkin} and \eqref{liftproblem} give
	\begin{equation*}
		\begin{aligned}
			a_{\delta,\rho_h^{\rm av}}(e_h,e_h)
			&=a_{\delta,\rho_h^{\rm av}}(w_\delta-v_h,e_h)
			+(F_h-F_\delta,e_h)_{L^2(\Omega)}\\
			&\quad+a_\delta(w_\delta,e_h)
			-a_{\delta,\rho_h^{\rm av}}(w_\delta,e_h).
		\end{aligned}
	\end{equation*}
	By Proposition \ref{proposition4.1}, $\rho_h^{\rm av}$ satisfies the kernel assumptions required for \eqref{3.17} with $r=r_*/2$. Applying \eqref{3.17} therefore gives a lower bound for the left-hand side. Combining this lower bound with continuity, the Cauchy--Schwarz inequality, and \eqref{kernel-difference} gives
	\begin{equation*}
		\begin{aligned}
			\frac{c_0(r_*/2)}{C_P^2+\beta_+\delta_0^2}
			\|e_h\|_{L^2(\Omega)}^2
			&\le a_{\delta,\rho_h^{\rm av}}(e_h,e_h)\le 4M_\rho\delta^{-2}
			\|w_\delta-v_h\|_{L^2(\Omega)}\|e_h\|_{L^2(\Omega)}\\
			&\quad+4\delta^{-2}\varepsilon_h
			\|w_\delta\|_{L^2(\Omega)}\|e_h\|_{L^2(\Omega)}
			+\|F_h-F_\delta\|_{L^2(\Omega)}\|e_h\|_{L^2(\Omega)}.
		\end{aligned}
	\end{equation*}
	The case $e_h=0$ is immediate. Otherwise, cancelling $\|e_h\|_{L^2(\Omega)}$ and applying the triangle inequality gives
	\begin{equation*}
		\begin{aligned}
			\|w_h-w_\delta\|_{L^2(\Omega)}
			&\le \|e_h\|_{L^2(\Omega)}
			+\|w_\delta-v_h\|_{L^2(\Omega)}\\
			&\le C_\delta\left(
			\|w_\delta-v_h\|_{L^2(\Omega)}
			+\varepsilon_h\|w_\delta\|_{L^2(\Omega)}
			+\|F_h-F_\delta\|_{L^2(\Omega)}
			\right).
		\end{aligned}
	\end{equation*}
	Taking the infimum over $v_h\in V_h^0$, we can obtain
	\begin{equation*}
		\|w_h-w_\delta\|_{L^2(\Omega)}
		\le C_\delta\left(
		\inf_{v_h\in V_h^0}\|w_\delta-v_h\|_{L^2(\Omega)}
		+\varepsilon_h\|w_\delta\|_{L^2(\Omega)}
		+\|F_h-F_\delta\|_{L^2(\Omega)}
		\right),
	\end{equation*}
	where $C_\delta$ is independent of $h$, $P_h$, and $q_h$. Since $C_c^\infty(\Omega)$ is dense in $L^2(\Omega)$, finite element interpolation gives $\inf_{v_h\in V_h^0}\|w_\delta-v_h\|_{L^2(\Omega)}\to0$ as $h\to0$. And the other terms tend to zero since $\varepsilon_h\to0$ and $\|F_h-F_\delta\|_{L^2(\Omega)}\to0$. Hence $w_h\to w_\delta$ in $L^2(\Omega)$. The proof is completed.
\end{proof}
\begin{remark}
	If $w_\delta\in H^2(\Omega)\cap H_0^1(\Omega)$, let $I_hw_\delta\in V_h^0$ be its Scott--Zhang interpolant. Summing the local estimates in \cite[(4.3)]{scott1990finite} gives $\|w_\delta-I_hw_\delta\|_{L^2(\Omega)}\le Ch^2|w_\delta|_{H^2(\Omega)}$. Using this interpolant in the best approximation term above gives
	\begin{equation*}
			\|w_h-w_\delta\|_{L^2(\Omega)}
			\le C_\delta\Bigl(
			h^2|w_\delta|_{H^2(\Omega)}
			+\varepsilon_h\|w_\delta\|_{L^2(\Omega)}+\|F_h-F_\delta\|_{L^2(\Omega)}
			\Bigr),
	\end{equation*}
	where $C_\delta$ is independent of $h$, $P_h$, and $q_h$. Consequently, $\|w_h-w_\delta\|_{L^2(\Omega)}=O(h^2)$ if $\varepsilon_h+\|F_h-F_\delta\|_{L^2(\Omega)}=O(h^2)$.
\end{remark}
\subsection{Discrete energy estimates}
To derive the discrete energy estimates, we use the $L^2$-orthogonal projection $\Pi_h:L^2(\Omega)\to V_h^0$. Under the mesh assumptions in Section \ref{sec2.2}, $\Pi_h$ is stable on $H_0^1(\Omega)$ \cite{bramble2002stability}. Together with orthogonality and the Poincaré inequality, we have
\begin{equation}\label{4.4}
	\|\Pi_hw\|_{L^2(\Omega)}^2\le\|w\|_{L^2(\Omega)}^2,\quad
	\|\nabla\Pi_hw\|_{L^2(\Omega)}^2
	\le C_\Pi\|\nabla w\|_{L^2(\Omega)}^2
\end{equation}
for every $w\in H_0^1(\Omega)$, where $C_\Pi\ge1$ is independent of $h$.

\begin{lemma}\label{lemma4.3}
	For $t>0$ and $u_h=\sum_iU_i\phi_i\in V_h^0$, define
	\begin{equation}\label{4.5}
		p_{t,h}(U)=
		\min_{W\in\mathbb R^{N_h}}
		\left\{
		W^TK_hW+t^{-1}(U-W)^TM_h(U-W)
		\right\}.
	\end{equation}
	Then we have
	\begin{equation}\label{4.6}
		p_{t,\Omega}(u_h)
		\le p_{t,h}(U)
		\le C_\Pi p_{t,\Omega}(u_h).
	\end{equation}
	Moreover, $p_{t,h}(U)=U^TH_hU$, where
	\begin{equation}\label{4.7}
		\begin{aligned}
			H_h
			&=M_h(M_h+tK_h)^{-1}K_h
			=K_h(M_h+tK_h)^{-1}M_h,\\
			H_h^{-1}
			&=K_h^{-1}+tM_h^{-1}.
		\end{aligned}
	\end{equation}
\end{lemma}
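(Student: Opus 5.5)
The plan has three parts: prove the lower inequality in \eqref{4.6} by restriction, prove the upper inequality using the projection $\Pi_h$ and \eqref{4.4}, and then compute the matrix form \eqref{4.7} by minimizing a quadratic.

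\emph{Lower inequality.} For any $W\in\mathbb R^{N_h}$ put $w_h=\sum_iW_i\phi_i\in V_h^0\subset H_0^1(\Omega)$. Then
\begin{equation*}
W^TK_hW+t^{-1}(U-W)^TM_h(U-W)=\|\nabla w_h\|_{L^2(\Omega)}^2+t^{-1}\|u_h-w_h\|_{L^2(\Omega)}^2 .
\end{equation*}
This is an admissible value in the infimum \eqref{3.7}. Minimizing over $W$ gives $p_{t,\Omega}(u_h)\le p_{t,h}(U)$.

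\emph{Upper inequality.} Take any $v\in H_0^1(\Omega)$ and let $W$ be the coefficient vector of $\Pi_hv$. By \eqref{4.4}, $\|\nabla\Pi_hv\|^2\le C_\Pi\|\nabla v\|^2$. Since $u_h\in V_h^0$, we have $u_h-\Pi_hv=\Pi_h(u_h-v)$, and $L^2$-stability gives $\|u_h-\Pi_hv\|\le\|u_h-v\|$. Because $C_\Pi\ge1$,
\begin{equation*}
p_{t,h}(U)\le C_\Pi\|\nabla v\|^2+t^{-1}\|u_h-v\|^2\le C_\Pi\bigl(\|\nabla v\|^2+t^{-1}\|u_h-v\|^2\bigr).
\end{equation*}
Taking the infimum over $v$ yields the upper bound in \eqref{4.6}. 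The key observation is that $\Pi_h$ fixes $u_h$; once that is used, this step is routine.

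\emph{Matrix form.} The functional in \eqref{4.5} is a strictly convex quadratic in $W$, since $K_h$ and $M_h$ are SPD, so the minimum is attained. Setting the gradient to zero gives $K_hW=t^{-1}M_h(U-W)$, so $W=(M_h+tK_h)^{-1}M_hU$ and $U-W=t(M_h+tK_h)^{-1}K_hU$. Substituting, and using the first-order condition to simplify, the minimum equals
\begin{equation*}
W^TK_hW+t^{-1}(U-W)^TM_h(U-W)=t^{-1}U^TM_h(U-W)=U^TM_h(M_h+tK_h)^{-1}K_hU .
\end{equation*}
The second expression in \eqref{4.7} follows by symmetry: the matrix is symmetric because it equals the transpose $K_h(M_h+tK_h)^{-1}M_h$. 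One can also check this directly, since $(M_h+tK_h)^{-1}$ and the identity $M_h(M_h+tK_h)^{-1}K_h=\bigl(K_h^{-1}(M_h+tK_h)M_h^{-1}\bigr)^{-1}$ are consistent.

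That last identity also gives the inverse:
\begin{equation*}
H_h^{-1}=K_h^{-1}(M_h+tK_h)M_h^{-1}=K_h^{-1}+tM_h^{-1},
\end{equation*}
which is the remaining claim in \eqref{4.7}. All of this is algebra, so the main care point of the proof lies in the projection argument of the upper inequality.
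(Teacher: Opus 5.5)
Your proposal is correct and follows the paper's proof essentially step for step: restriction to $V_h^0\subset H_0^1(\Omega)$ for the lower bound, the $L^2$ projection with $\Pi_hu_h=u_h$ and \eqref{4.4} for the upper bound, and the first-order condition $(M_h+tK_h)W=M_hU$ for \eqref{4.7}. The one weak spot is your symmetry sentence, which is circular as written ("symmetric because it equals the transpose"). The clean justification, which is the one the paper uses, is that $H_h^{-1}=K_h^{-1}+tM_h^{-1}$ is symmetric, so $H_h$ is symmetric and therefore equals its transpose $K_h(M_h+tK_h)^{-1}M_h$.
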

\begin{proof}
	We first compare $p_{t,\Omega}(u_h)$ and $p_{t,h}(U)$. By the definitions of $K_h$ and $M_h$, these quantities are obtained by minimizing the same energy over $H_0^1(\Omega)$ and $V_h^0$, respectively. Since $V_h^0\subset H_0^1(\Omega)$, the lower bound in \eqref{4.6} follows.
	
	To prove the upper bound, take any $w\in H_0^1(\Omega)$ and write $\Pi_hw=\sum_iW_i\phi_i$. Since $u_h\in V_h^0$, we have $\Pi_hu_h=u_h$ and hence $u_h-\Pi_hw=\Pi_h(u_h-w)$. Using $W$ in \eqref{4.5}, together with \eqref{4.4} and $C_\Pi\ge1$, we have
	\begin{equation*}
		\begin{aligned}
			p_{t,h}(U)
			&\le \|\nabla\Pi_hw\|_{L^2(\Omega)}^2
			+t^{-1}\|u_h-\Pi_hw\|_{L^2(\Omega)}^2\\
			&\le C_\Pi\left(
			\|\nabla w\|_{L^2(\Omega)}^2
			+t^{-1}\|u_h-w\|_{L^2(\Omega)}^2
			\right).
		\end{aligned}
	\end{equation*}
	Taking the infimum over $w\in H_0^1(\Omega)$ proves the upper bound in \eqref{4.6}.
	
	Next, to derive the expression for $H_h$ in \eqref{4.7}, we differentiate the expression $W^TK_hW+t^{-1}(U-W)^TM_h(U-W)$ in \eqref{4.5} with respect to $W$. Setting the derivative to zero gives $(M_h+tK_h)W_*=M_hU$ for the minimizer $W_*$. Substituting $W_*=(M_h+tK_h)^{-1}M_hU$ into \eqref{4.5} gives $p_{t,h}(U)=U^TH_hU$, where
	\begin{equation*}
			H_h=t^{-1}\left[M_h-M_h(M_h+tK_h)^{-1}M_h\right]=M_h(M_h+tK_h)^{-1}K_h.
	\end{equation*}
	Its inverse is
	\begin{equation*}
		H_h^{-1}
		=K_h^{-1}(M_h+tK_h)M_h^{-1}
		=K_h^{-1}+tM_h^{-1}.
	\end{equation*}
	This inverse is symmetric positive definite, so $H_h$ is also symmetric positive definite. Transposing its expression above gives $H_h=K_h(M_h+tK_h)^{-1}M_h$, which proves \eqref{4.7} and ends the proof. 
\end{proof}

With \(t=\beta\delta^2\), we refer to \(H_h^{-1}=K_h^{-1}+\beta\delta^2M_h^{-1}\) in \eqref{4.7} as the ideal additive preconditioner. The practical preconditioner \eqref{2.18} replaces \(K_h^{-1}\) and \(M_h^{-1}\) by \(G_h\) and \(D_h^{-1}\), respectively. Assumption \ref{assumption2.1} gives the required comparison between \(G_h\) and \(K_h^{-1}\). We next establish the corresponding comparison between \(D_h^{-1}\) and \(M_h^{-1}\). For each $T\in\mathcal T_h$, let $M_T$ be the element mass matrix obtained by integrating products of the local basis functions, and define $D_T$ as the diagonal matrix whose diagonal entries are the row sums of $M_T$. Using the simplex integration formulas to evaluate $M_T$ and then taking its row sums, we can obtain
\begin{equation*}
	M_T=\frac{|T|}{(d+1)(d+2)}
	\left(I_{d+1}+\mathbf1\mathbf1^T\right),
	\qquad
	D_T=\frac{|T|}{d+1}I_{d+1},
\end{equation*}
where $I_{d+1}$ is the identity and $\mathbf1=(1,\ldots,1)^T$.

To compare $M_T$ and $D_T$, we write $X\preceq Y$ for symmetric matrices $X$ and $Y$ if $Y-X$ is positive semidefinite. Since the eigenvalues of $I_{d+1}+\mathbf1\mathbf1^T$ are $1$ and $d+2$, the formulas above give
\begin{equation*}
	\frac1{d+2}D_T\preceq M_T\preceq D_T.
\end{equation*}
Summing over the elements and restricting to the free nodes, we can obtain
\begin{equation}\label{4.8}
	\frac1{d+2}D_h\preceq M_h\preceq D_h,\quad
	\frac1{d+2}M_h^{-1}\preceq D_h^{-1}\preceq M_h^{-1}.
\end{equation}
\section{Proof of Theorem \ref{theorem2.2}}\label{sec5}
Let $\delta_0$ be given by Lemma \ref{lemma3.2} and fix $0<\delta\le\delta_0$. Set
\begin{equation}
	c_B=\min\{c_G,1/(d+2)\},\quad
	C_B=\max\{C_G,1\},\quad
	c=\frac{c_0(r_*/2)c_B}{C_\Pi},\quad
	C=C_0C_B.
\end{equation}
For $u_h\in V_h^0$ with nodal coefficient vector $U$, Proposition \ref{proposition4.1} and Theorem \ref{theorem3.3} with $k=\rho^{\rm av}$ and $r=r_*/2$ give
\begin{equation}\label{5.1}
	c_0(r_*/2)p_{t,\Omega}(u_h)
	\le U^TA_hU
	\le C_0p_{t,\Omega}(u_h).
\end{equation}
Combining \eqref{5.1} with \eqref{4.6} in Lemma \ref{lemma4.3} yields
\begin{equation}\label{5.2}
	\frac{c_0(r_*/2)}{C_\Pi}H_h
	\preceq A_h
	\preceq C_0H_h.
\end{equation}
For the preconditioner $B_h=G_h+tD_h^{-1}$ defined in \eqref{2.18}, Assumption \ref{assumption2.1} and \eqref{4.8} give
\begin{equation*}
	c_GK_h^{-1}+\frac{t}{d+2}M_h^{-1}
	\preceq B_h
	\preceq C_GK_h^{-1}+tM_h^{-1}.
\end{equation*}
Using the definitions of $c_B$ and $C_B$ and the identity $H_h^{-1}=K_h^{-1}+tM_h^{-1}$ in \eqref{4.7}, we can obtain
\begin{equation}\label{5.3}
	c_BH_h^{-1}
	\preceq B_h
	\preceq C_BH_h^{-1}.
\end{equation}
Consequently,
\begin{equation}\label{5.4}
	C_B^{-1}H_h
	\preceq B_h^{-1}
	\preceq c_B^{-1}H_h.
\end{equation}
By \eqref{5.4}, $c_BB_h^{-1}\preceq H_h\preceq C_BB_h^{-1}$. Substituting these bounds into \eqref{5.2} gives
\begin{equation*}
	cB_h^{-1}
	\preceq \frac{c_0(r_*/2)}{C_\Pi}H_h
	\preceq A_h
	\preceq C_0H_h
	\preceq CB_h^{-1}.
\end{equation*}
These matrix inequalities imply that, for any nonzero $X\in\mathbb R^{N_h}$,
\begin{equation}\label{5.5}
	c\le
	\frac{X^TA_hX}
	{X^TB_h^{-1}X}
	\le C.
\end{equation}
Setting $X=B_h^{1/2}Y$ in \eqref{5.5} gives \eqref{2.19} and hence \eqref{2.20}. The proof is completed.
\qed

\section{Numerical experiments}\label{sec6}
In this section, we give several numerical experiments on $\Omega=(0,1)^2$ to test the proposed method. In section \ref{sec6.1}, we examine the convergence of the finite element method for nonlocal diffusion problem, and the remaining experiments study the preconditioner and its local solver. All the computations are performed in MATLAB R2025b on a MacBook Pro laptop with an Apple M5 Max processor and 128 GB of memory.
\subsection{Finite element convergence}\label{sec6.1}
We test finite element convergence for nonlocal diffusion \eqref{modelequation} with the proposed pair-cell assembly. We set $g_\delta=0$, $\delta=0.2$, and use the kernel
\begin{equation}\label{6.1}
	\rho(z)=\frac{4}{\pi},
	\quad z\in B_1.
\end{equation}
For this kernel, one can verify that $\mu_\rho=4$ and $\beta=1/8$. Moreover, we choose the exact solution
\begin{equation}\label{6.2}
	u_\delta(x_1,x_2)
	=
	\begin{cases}
		x_1(1-x_1)x_2(1-x_2),&(x_1,x_2)\in\Omega,\\
		0,&(x_1,x_2)\in\Omega_I^\delta,
	\end{cases}.
\end{equation}
The corresponding source $f_\delta=-\mathscr L_\delta u_\delta$ is evaluated by a 20-point Gauss quadrature rule. We approximate this problem by \eqref{galerkin} on uniform triangular meshes with $n=4,8,16,32,64$ subdivisions in each coordinate direction. Here $h=\sqrt2/n$, and $w_h$ denotes the approximation of $u_\delta|_\Omega$. For each mesh, we approximate the reference ball $B_1$ by a regular inscribed polygon $P_h\subset B_1$ with $m=4n$ sides. Since the kernel is constant, its cell average is exact on $P_h$, and
\begin{equation*}
	\|\rho_h^{\rm av}-\rho\|_{L^1(B_1)}
	=\frac4\pi |B_1\setminus P_h|
	=\frac4\pi\left(\pi-\frac m2\sin\frac{2\pi}{m}\right)
	=O(h^2).
\end{equation*}
A symmetric kernel partition is refined so that $q_h\to0$. Since $\rho$ is constant, this refinement does not change
$\rho_h^{\rm av}$. Moreover, $F_h=F_\delta=f_\delta$ and $u_\delta|_\Omega\in H^2(\Omega)\cap H_0^1(\Omega)$. Therefore, the estimate above and the remark following Theorem~\ref{theorem4.2} give $\|w_h-u_\delta\|_{L^2(\Omega)}=O(h^2)$.

We assemble $A_h$ by the pair-cell method with the quadrature rule \eqref{2.11} and solve $A_hU=b_h$ by Cholesky factorization, and the relative $L^2$ error is
\begin{equation}\label{6.3}
	E_{L^2,h}
	=\frac{\|u_\delta-w_h\|_{L^2(\Omega)}}{\|u_\delta\|_{L^2(\Omega)}}.
\end{equation}
The load integrals $(f_\delta,\phi_i)_{L^2(\Omega)}$ and the $L^2$ norms in \eqref{6.3} are evaluated by Gauss rules with 24 and 6 points, respectively. Table \ref{tab:fem-convergence} shows second-order convergence, in agreement with the estimate following Theorem \ref{theorem4.2}.
\begin{table}[htbp]
	\centering
	\caption{Relative $L^2$ errors and convergence orders.}
	\label{tab:fem-convergence}
	\begin{tabular}{@{}rrrrr@{}}
		\toprule
		\multicolumn{1}{c}{$N_h$}& \multicolumn{1}{c}{$m$}& \multicolumn{1}{c}{$h$}& \multicolumn{1}{c}{$E_{L^2,h}$}& \multicolumn{1}{c}{order} \\
		\midrule
		9 &  16 & $3.5355\times10^{-1}$ & $8.2123\times10^{-2}$ & -- \\
		49 &  32 & $1.7678\times10^{-1}$ & $1.8223\times10^{-2}$ & 2.17 \\
		225 &  64 & $8.8388\times10^{-2}$ & $4.4836\times10^{-3}$ & 2.02 \\
		961 & 128 & $4.4194\times10^{-2}$ & $1.1269\times10^{-3}$ & 1.99 \\
		3969 & 256 & $2.2097\times10^{-2}$ & $2.8361\times10^{-4}$ & 1.99 \\
		\bottomrule
	\end{tabular}
\end{table}

\subsection{Mesh and horizon robustness}\label{sec6.2}
This experiment tests the robustness of $B_h$ with respect to the mesh size and horizon. We take $g_\delta$ and $\rho$ as in Section \ref{sec6.1}. The reference ball is approximated by the regular inscribed polygon $P_{16}$ with a symmetric 64-cell partition and $q=1/2$. Starting from a mesh with 34 triangles, we apply five successive uniform refinements, each subdividing every triangle into four, and use the resulting meshes with $N_h=53,241,1025,4225,17153$. On level $j=0,\ldots,4$, we take $\delta/h=0.8$, $0.8\,2^j$, and $0.8\,2^{-j}$ for the constant, increasing, and decreasing paths, respectively. The paths share their first parameter pair and contain 13 distinct pairs in total.

Moreover, we choose $G_h$ in \eqref{2.18} as a symmetric geometric multigrid V-cycle with three weighted Jacobi pre- and post-smoothing steps. On level $\ell$, the weight is $\omega_\ell=1.7/\lambda_{\max}(\operatorname{diag}(K_\ell)^{-1}K_\ell)$.

For each parameter pair, we apply PCG with zero initial guess and relative residual tolerance $10^{-8}$ to smooth, random, and high-frequency loads. For each run, we recompute
\begin{equation}\label{6.4}
	r_{\rm true}=\frac{\|b_h-A_hU_{\rm out}\|_2}{\|b_h\|_2}
\end{equation}
and report the maximum iteration count and $r_{\rm true}$ over the three runs. The extremal eigenvalues of $B_h^{1/2}A_hB_h^{1/2}$ are computed directly for $N_h\le4225$. And for $N_h = 17153$, the Arnoldi iteration is applied to the similar matrix $B_hA_h$ to obtain the extremal eigenvalues. Figure \ref{fig:mesh-horizon-robustness} shows the extremal eigenvalues of $B_h^{1/2}A_hB_h^{1/2}$ along the three paths, and Table \ref{tab:mesh-horizon-robustness} reports the corresponding condition numbers and PCG results. It can be found that along all three paths, the condition number is at most $5.840$, and PCG requires at most 22 iterations. These results confirm the robustness of $B_h$ with respect to both $h$ and $\delta$. 

\begin{figure}[htbp]
	\centering
	\subfigure[Constant $\delta/h$]{%
		\includegraphics[width=0.31\textwidth]{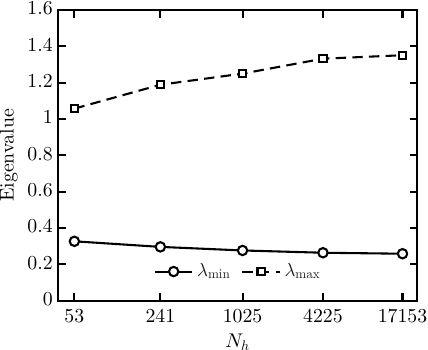}%
		\label{fig:mesh-horizon-robustness-a}}
	\hfill
	\subfigure[Increasing $\delta/h$]{%
		\includegraphics[width=0.31\textwidth]{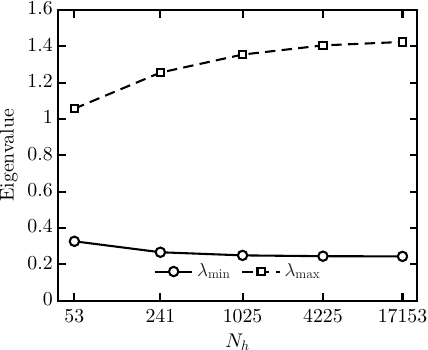}%
		\label{fig:mesh-horizon-robustness-b}}
	\hfill
	\subfigure[Decreasing $\delta/h$]{%
		\includegraphics[width=0.31\textwidth]{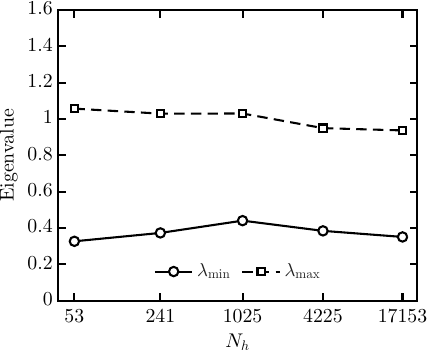}%
		\label{fig:mesh-horizon-robustness-c}}
	\caption{Extremal eigenvalues for the three $\delta/h$ paths.}
	\label{fig:mesh-horizon-robustness}
\end{figure}
\begin{table}[htbp]
	\centering
	\caption{Condition numbers and PCG results for three $\delta/h$ paths.}
	\label{tab:mesh-horizon-robustness}
	\small
	\setlength{\tabcolsep}{3.5pt}
	\begin{tabular}{@{}lrrrrrr@{}}
		\toprule
		\multicolumn{1}{c}{path}
		& \multicolumn{1}{c}{$N_h$}
		& \multicolumn{1}{c}{$h$}
		& \multicolumn{1}{c}{$\delta/h$}
		& \multicolumn{1}{c}{$\kappa$}
		& \multicolumn{1}{c}{\shortstack{PCG\\iterations}}
		& \multicolumn{1}{c}{\shortstack{max.\\$r_{\rm true}$}} \\
		\midrule
		constant
		&   53 & 0.305604 & 0.8 & 3.233 & 15
		& $5.82\times10^{-9}$ \\
		constant
		&  241 & 0.152802 & 0.8 & 4.012 & 17
		& $8.97\times10^{-9}$ \\
		constant
		& 1025 & 0.076401 & 0.8 & 4.519 & 19
		& $4.45\times10^{-9}$ \\
		constant
		& 4225 & 0.038200 & 0.8 & 5.045 & 20
		& $8.76\times10^{-9}$ \\
		constant
		& 17153 & 0.019100 & 0.8 & 5.224 & 20
		& $9.21\times10^{-9}$ \\
		\addlinespace
		increasing
		&   53 & 0.305604 & 0.8 & 3.233 & 15
		& $5.82\times10^{-9}$ \\
		increasing
		&  241 & 0.152802 & 1.6 & 4.708 & 19
		& $9.91\times10^{-9}$ \\
		increasing
		& 1025 & 0.076401 & 3.2 & 5.430 & 21
		& $7.31\times10^{-9}$ \\
		increasing
		& 4225 & 0.038200 & 6.4 & 5.735 & 22
		& $6.39\times10^{-9}$ \\
		increasing
		& 17153 & 0.019100 & 12.8 & 5.840 & 22
		& $7.15\times10^{-9}$ \\
		\addlinespace
		decreasing
		&   53 & 0.305604 & 0.8 & 3.233 & 15
		& $5.82\times10^{-9}$ \\
		decreasing
		&  241 & 0.152802 & 0.4 & 2.760 & 14
		& $7.28\times10^{-9}$ \\
		decreasing
		& 1025 & 0.076401 & 0.2 & 2.338 & 12
		& $5.21\times10^{-9}$ \\
		decreasing
		& 4225 & 0.038200 & 0.1 & 2.471 & 13
		& $6.07\times10^{-9}$ \\
		decreasing
		& 17153 & 0.019100 & 0.05 & 2.670 & 14
		& $6.16\times10^{-9}$ \\
		\bottomrule
	\end{tabular}
\end{table}

Besides, to examine the role of the mass correction, we compare the local Poisson preconditioner $K_h^{-1}$, the ideal additive preconditioner $H_h^{-1}=K_h^{-1}+\beta\delta^2M_h^{-1}$ in \eqref{4.7}, and the practical preconditioner $B_h$ in \eqref{2.18} for $N_h=53$ and $\delta/h=0.8$. Table \ref{tab:preconditioner-comparison} shows that the mass correction reduces the condition number from $30.344$ to $1.882$. Moreover, when the exact inverses are replaced by $G_h$ and $D_h^{-1}$, the practical preconditioner $B_h$ gives a condition number of $3.233$, which remains small.
\begin{table}[htbp]
	\centering
	\caption{Preconditioner comparison at $N_h=53$ and $\delta/h=0.8$.}
	\label{tab:preconditioner-comparison}
	\begin{tabular}{@{}lrr@{}}
		\toprule
		\multicolumn{1}{c}{preconditioner}
		& \multicolumn{1}{c}{condition number}
		& \multicolumn{1}{c}{PCG iterations} \\
		\midrule
		$K_h^{-1}$                        & 30.344 & 28--35 \\
		$K_h^{-1}+\beta\delta^2M_h^{-1}$ &  1.882 & 10 \\
		$B_h$                             &  3.233 & 15 \\
		\bottomrule
	\end{tabular}
\end{table}
\subsection{Kernel dependence}
To test the robustness of $B_h$ with respect to kernel shape, we consider the constant kernel $\rho_0$, the discontinuous angular step kernel $\rho_{\rm step}$, the smooth kernel $\rho_{\rm mix}$ combining second- and fourth-order angular modes, the smooth fourfold kernel $\rho_4$, and the narrow double-cone kernel $\rho_{\rm cone}$. For $z=r(\cos\theta,\sin\theta)\in B_1\setminus\{0\}$, set $c_{\rm ref}=4/\pi$, $R=19$, and $\phi=\omega=\pi/16$. Let $e_\phi=(\cos\phi,\sin\phi)$ and $e_\phi^\perp=(-\sin\phi,\cos\phi)$, and define
\begin{equation*}
	C_{\phi,\omega}
	=\left\{\zeta\in B_1:
	|e_\phi^\perp\cdot\zeta|
	\le \tan(\omega)|e_\phi\cdot\zeta|\right\}.
\end{equation*}
Then the five kernels are defined by
\begin{equation}\label{6.5}
	\begin{aligned}
		\rho_0(z)
		&=c_{\rm ref},\\
		\rho_{\rm step}(z)
		&=c_{\rm ref}\left[1+0.9\operatorname{sgn}(\cos2\theta)\right],\\
		\rho_{\rm mix}(z)
		&=c_{\rm ref}\left[
		1+0.45r^2\cos2\left(\theta-\frac{\pi}{16}\right)
		+0.45r^4\cos4\left(\theta-\frac{\pi}{32}\right)
		\right],\\
		\rho_4(z)
		&=c_{\rm ref}\left[
		1+0.9r^4\cos4\left(\theta-\frac{\pi}{32}\right)
		\right],\\
		\rho_{\rm cone}(z)
		&=\frac{4}{\pi+2\omega(R-1)}
		\left[1+(R-1)\mathbf1_{C_{\phi,\omega}}(z)\right],
	\end{aligned}
\end{equation}
where $\mathbf1_E$ denotes the indicator of $E$. One can verify that for all five kernels, $\beta=1/8$ and \eqref{2.1}--\eqref{2.2} hold with the common constants $M_\rho=4$, $r_*=1$, and $\rho_*=0.4/\pi$. The five kernels
are shown in Figure \ref{fig:reference-kernels}.

\begin{figure}[htbp]
	\centering
	\subfigure[$\rho_0$]{%
		\includegraphics[width=0.31\textwidth,trim=2mm 2mm 2mm 2mm,clip]{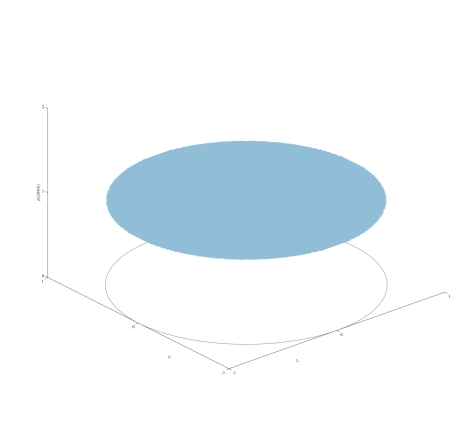}%
		\label{fig:reference-kernels-a}}
	\hfill
	\subfigure[$\rho_{\rm step}$]{%
		\includegraphics[width=0.31\textwidth,trim=2mm 2mm 2mm 2mm,clip]{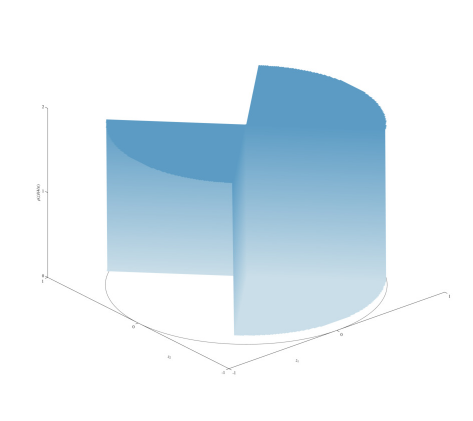}%
		\label{fig:reference-kernels-b}}
	\hfill
	\subfigure[$\rho_{\rm mix}$]{%
		\includegraphics[width=0.31\textwidth,trim=2mm 2mm 2mm 2mm,clip]{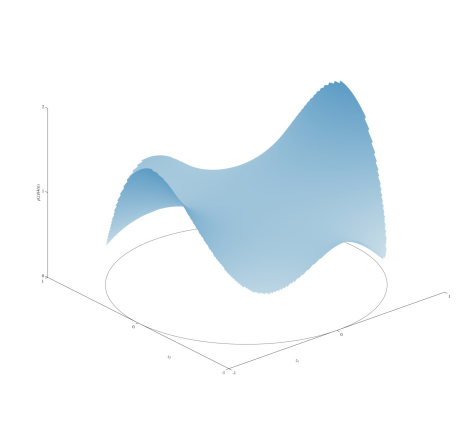}%
		\label{fig:reference-kernels-c}}
	\par\vspace{-1mm}
	\subfigure[$\rho_4$]{%
		\includegraphics[width=0.31\textwidth,trim=2mm 2mm 2mm 2mm,clip]{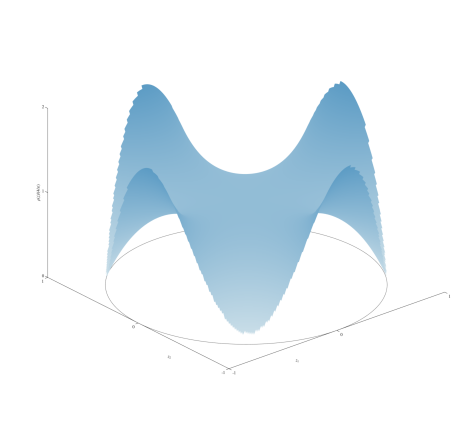}%
		\label{fig:reference-kernels-d}}
	\hspace{0.04\textwidth}
	\subfigure[$\rho_{\rm cone}$]{%
		\includegraphics[width=0.31\textwidth,trim=2mm 2mm 2mm 2mm,clip]{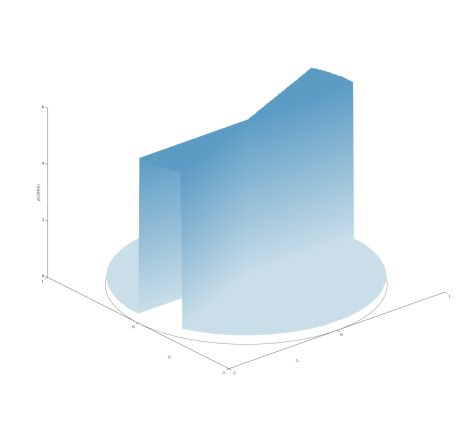}%
		\label{fig:reference-kernels-e}}
	\caption{Plots of the five kernel functions.}
	\label{fig:reference-kernels}
\end{figure}

Table \ref{tab:kernel-robustness} reports the results using the settings of Section \ref{sec6.2} and a symmetric 64-cell partition of $P_{16}$ with $q=1/2$. Across all 35 tests, $\kappa<7.45$ and PCG requires at most 25 iterations. Thus $B_h$ remains effective for the tested kernel class.

\begin{table}[htbp]
	\centering
	\caption{Condition numbers; maximum PCG iterations are in parentheses.}
	\label{tab:kernel-robustness}
	\small
	\setlength{\tabcolsep}{3.5pt}
	\begin{tabular}{@{}rrrrrrr@{}}
		\toprule
		\multicolumn{1}{c}{$N_h$}
		& \multicolumn{1}{c}{$\delta/h$}
		& \multicolumn{1}{c}{$\rho_0$}
		& \multicolumn{1}{c}{$\rho_{\rm step}$}
		& \multicolumn{1}{c}{$\rho_{\rm mix}$}
		& \multicolumn{1}{c}{$\rho_4$}
		& \multicolumn{1}{c}{$\rho_{\rm cone}$} \\
		\midrule
		53 & 0.8 & $3.233\,(15)$ & $4.088\,(17)$ & $3.356\,(15)$
		& $3.238\,(15)$ & $4.637\,(18)$ \\
		241 & 0.4 & $2.760\,(14)$ & $4.128\,(18)$ & $2.952\,(14)$
		& $2.804\,(14)$ & $5.618\,(21)$ \\
		241 & 0.8 & $4.012\,(17)$ & $5.521\,(20)$ & $4.252\,(18)$
		& $4.027\,(17)$ & $6.259\,(22)$ \\
		241 & 1.6 & $4.708\,(19)$ & $6.098\,(22)$ & $4.863\,(20)$
		& $4.715\,(19)$ & $6.534\,(23)$ \\
		1025 & 0.2 & $2.338\,(12)$ & $5.549\,(21)$ & $2.576\,(13)$
		& $2.342\,(12)$ & $6.910\,(24)$ \\
		1025 & 0.8 & $4.519\,(19)$ & $6.422\,(23)$ & $4.814\,(19)$
		& $4.620\,(19)$ & $7.406\,(25)$ \\
		1025 & 3.2 & $5.430\,(21)$ & $7.174\,(24)$ & $5.633\,(21)$
		& $5.523\,(21)$ & $7.443\,(25)$ \\
		\bottomrule
	\end{tabular}
\end{table}

\section{Conclusions}\label{sec7}
In this paper, we develop a preconditioner $B_h=G_h+\beta\delta^2D_h^{-1}$ for finite element discretizations of volume-constrained nonlocal diffusion problems that is robust with respect to both the mesh size $h$ and the horizon $\delta$. The preconditioner combines a local Poisson solver with a lumped-mass correction, while the proposed pair-cell assembly exactly realizes the averaged-kernel Galerkin matrix and preserves its symmetry and positive definiteness. We prove that under the stated assumptions, the preconditioned spectrum is bounded independently of $h$, $\delta$, and the kernel partition. The numerical results confirm the predicted $L^2$ convergence of the finite element solution, and the robustness of the preconditioner across the tested meshes, horizons, and kernels.


\section*{Funding}
J. Lu's work is partially supported by the National Natural Science Foundation of China under Grant number 12501565. Y. Nie's work is partially supported by the National Natural Science Foundation of China under Grant number 12571440. 

\bibliographystyle{unsrt} 
\bibliography{mybibfile}

@article{braess1983new,
	title={A new convergence proof for the multigrid method including the V-cycle},
	author={Braess, Dietrich and Hackbusch, Wolfgang},
	journal={SIAM Journal on Numerical Analysis},
	volume={20},
	number={5},
	pages={967--975},
	year={1983},
	publisher={SIAM}
}

@article{brannick2008uniform,
	title={Uniform convergence of the multigrid V-cycle on graded meshes for corner singularities},
	author={Brannick, James J and Li, Hengguang and Zikatanov, Ludmil T},
	journal={Numerical Linear Algebra with Applications},
	volume={15},
	number={2-3},
	pages={291--306},
	year={2008},
	publisher={Wiley Online Library}
}

@book{saad2003iterative,
	title={Iterative methods for sparse linear systems},
	author={Saad, Yousef},
	year={2003},
	publisher={SIAM}
}

@article{scott1990finite,
	title={Finite element interpolation of nonsmooth functions satisfying boundary conditions},
	author={Scott, L Ridgway and Zhang, Shangyou},
	journal={Mathematics of Computation},
	volume={54},
	number={190},
	pages={483--493},
	year={1990}
}

@article{bramble2002stability,
	title={On the stability of the {$L^2$} projection in {$H^1$ ($\Omega$)}},
	author={Bramble, James and Pasciak, Joseph and Steinbach, Olaf},
	journal={Mathematics of Computation},
	volume={71},
	number={237},
	pages={147--156},
	year={2002}
}

@article{silling2000reformulation,
	title={Reformulation of elasticity theory for discontinuities and long-range forces},
	author={Silling, Stewart A},
	journal={Journal of the Mechanics and Physics of Solids},
	volume={48},
	number={1},
	pages={175--209},
	year={2000},
	publisher={Elsevier}
}

@article{dipasquale2014crack,
	title={Crack propagation with adaptive grid refinement in 2D peridynamics},
	author={Dipasquale, Daniele and Zaccariotto, Mirco and Galvanetto, Ugo},
	journal={International Journal of Fracture},
	volume={190},
	number={1},
	pages={1--22},
	year={2014},
	publisher={Springer}
}

@article{panchadhara2016application,
	title={Application of peridynamic stress intensity factors to dynamic fracture initiation and propagation},
	author={Panchadhara, Rohan and Gordon, Peter A},
	journal={International Journal of Fracture},
	volume={201},
	number={1},
	pages={81--96},
	year={2016},
	publisher={Springer}
}

@article{zhou2025research,
	title={Research on the crack propagation and bifurcation under dynamic indentation with peridynamics},
	author={Zhou, Ping and Duan, Wenbo and Peng, Kai and Guo, Dongming},
	journal={Engineering Fracture Mechanics},
	volume={315},
	pages={110804},
	year={2025},
	publisher={Elsevier}
}

@article{du2012analysis,
	title={Analysis and approximation of nonlocal diffusion problems with volume constraints},
	author={Du, Qiang and Gunzburger, Max and Lehoucq, Richard B and Zhou, Kun},
	journal={SIAM Review},
	volume={54},
	number={4},
	pages={667--696},
	year={2012},
	publisher={SIAM}
}

@article{du2014nonlocal,
	title={Nonlocal  convection-diffusion volume-constrained problems and jump processes},
	author={Du, Qiang and Huang, Zhan and Lehoucq, Richard B},
	journal={Discrete \& Continuous Dynamical Systems-Series B},
	volume={19},
	number={4},
	pages={961},
	year={2014}
}

@article{d2013fractional,
	title={The fractional Laplacian operator on bounded domains as a special case of the nonlocal diffusion operator},
	author={D'Elia, Marta and Gunzburger, Max},
	journal={Computers \& Mathematics with Applications},
	volume={66},
	number={7},
	pages={1245--1260},
	year={2013},
	publisher={Elsevier}
}

@article{tian2016asymptotically,
	title={Asymptotically compatible schemes for the approximation of fractional Laplacian and related nonlocal diffusion problems on bounded domains},
	author={Tian, Xiaochuan and Du, Qiang and Gunzburger, Max},
	journal={Advances in Computational Mathematics},
	volume={42},
	number={6},
	pages={1363--1380},
	year={2016},
	publisher={Springer}
}

@incollection{du2023nonlocal,
	title={Nonlocal diffusion models with consistent local and fractional limits},
	author={Du, Qiang and Tian, Xiaochuan and Zhou, Zhi},
	booktitle={A$^3$N$^2$M: Approximation, Applications, and Analysis of Nonlocal, Nonlinear Models: Proceedings of the 50th John H. Barrett Memorial Lectures},
	pages={175--213},
	year={2023},
	publisher={Springer}
}

@article{du2025error,
	title={Error estimates of finite element methods for nonlocal problems using exact or approximated interaction neighbourhoods},
	author={Du, Qiang and Xie, Hehu and Yin, Xiaobo and Zhang, Jiwei},
	journal={IMA Journal of Numerical Analysis},
	pages={draf106},
	year={2025},
	publisher={Oxford University Press}
}

@article{tian2014asymptotically,
	title={Asymptotically compatible schemes and applications to robust discretization of nonlocal models},
	author={Tian, Xiaochuan and Du, Qiang},
	journal={SIAM Journal on Numerical Analysis},
	volume={52},
	number={4},
	pages={1641--1665},
	year={2014},
	publisher={SIAM}
}

@article{d2021cookbook,
	title={A cookbook for approximating Euclidean balls and for quadrature rules in finite element methods for nonlocal problems},
	author={D'Elia, Marta and Gunzburger, Max and Vollmann, Christian},
	journal={Mathematical Models and Methods in Applied Sciences},
	volume={31},
	number={08},
	pages={1505--1567},
	year={2021},
	publisher={World Scientific}
}

@article{chen2024efficient,
	title={{Efficient implementation of 3D FEM for nonlocal Poisson problem with different ball approximation strategies}},
	author={Chen, Gengjian and Ma, Yuheng and Zhang, Jiwei},
	journal={Communications in Computational Physics},
	volume={36},
	number={5},
	pages={1378--1410},
	year={2024}
}

@article{lu2026nonlocal,
	title={A nonlocal nonlinear {Schr{\"o}dinger} model: well-posedness, local limit, and structure-preservingasymptotically compatible Fourier approximations},
	author={Lu, Jiashu and Nie, Yufeng and Xie, Xinning and Zhang, Pingrui},
	journal={arXiv preprint arXiv:2608.09372},
	year={2026}
}

@article{chen2017convergence,
	title={Convergence analysis of a multigrid method for a nonlocal model},
	author={Chen, Minghua and Deng, Weihua},
	journal={SIAM Journal on Matrix Analysis and Applications},
	volume={38},
	number={3},
	pages={869--890},
	year={2017},
	publisher={SIAM}
}

@article{chen2024fast,
	title={Fast algebraic multigrid for block-structured dense systems arising from nonlocal diffusion problems},
	author={Chen, Minghua and Cao, Rongjun and Serra-Capizzano, Stefano},
	journal={Calcolo},
	volume={61},
	number={4},
	pages={57},
	year={2024},
	publisher={Springer}
}

@article{liu2025fast,
	title={A fast computational framework for the linear peridynamic model},
	author={Liu, Chenguang and Tian, Hao and Don, Wai Sun and Wang, Hong},
	journal={Engineering with Computers},
	volume={41},
	number={2},
	pages={965--988},
	year={2025},
	publisher={Springer}
}

@article{tian2024fast,
	title={A fast implementation of the linear bond-based peridynamic beam model},
	author={Tian, Hao and Yang, Xianchu and Liu, Chenguang and Liu, Guilin},
	journal={Advances in Applied Mathematics and Mechanics},
	volume={16},
	number={2},
	pages={305--330},
	year={2024}
}

@article{schuster2025schwarz,
	title={Schwarz Methods for nonlocal problems},
	author={Schuster, Matthias and Vollmann, Christian and Schulz, Volker},
	journal={Journal of Peridynamics and Nonlocal Modeling},
	volume={7},
	number={3},
	pages={8},
	year={2025},
	publisher={Springer}
}

@article{zhang2026spectral,
	title={Spectral Analysis of $\tau$-Preconditioners for Two-Level Toeplitz Systems with Applications in Nonlocal Diffusion Models},
	author={Zhang, Jiali and Sun, Tao and Sun, Haiwei and Zhang, Jiwei},
	journal={Journal of Scientific Computing},
	volume={106},
	number={1},
	pages={21},
	year={2026},
	publisher={Springer}
}

\end{document}